\numberwithin{equation}{section}
\newtheorem{thm}{Theorem}[section]
\newtheorem{lemma}[thm]{Lemma}
\newtheorem{prop}[thm]{Proposition}
\newtheorem{cor}[thm]{Corollary}
\theoremstyle{definition}
\newtheorem{defn}[thm]{Definition}
\newtheorem{ntn}[thm]{Notation}
\theoremstyle{remark}
\newtheorem{eg}[thm]{Example}
\newtheorem{rmk}[thm]{Remark}
\newcommand{\osupp}{\operatorname{osupp}}
\newcommand{\supp}{\operatorname{supp}}
\newcommand{\lsp}{\operatorname{span}}
\newcommand{\clsp}{\overline{\lsp}}
\newcommand{\CC}{\mathbb{C}}
\newcommand{\ZZ}{\mathbb{Z}}
\newcommand{\Bb}{\mathcal{B}}
\newcommand{\Ff}{\mathcal{F}}
\newcommand{\Gg}{\mathcal{G}}
\newcommand{\Kk}{\mathcal{K}}
\newcommand{\Ll}{\mathcal{L}}
\newcommand{\Nn}{\mathcal{N}}
\newcommand{\Oo}{\mathcal{O}}
\newcommand{\Tt}{\mathcal{T}}
\title[Topological graph $C^*$-algebras]{An elementary approach to $C^*$-algebras associated to topological graphs}
\date{15 May 2012}
\author{Hui Li}
\email{hl338@uowmail.edu.au}
\author{David Pask}
\author{Aidan Sims}
\email{dpask, asims@uow.edu.au}
\address{School of Mathematics and Applied Statistics \\
Building 15\\
University of Wollongong\\
Wollongong NSW 2522\\
AUSTRALIA}
\subjclass[2010]{46L05}
\keywords{$C^*$-algebra; topological graph; Cuntz-Pimsner algebra; $C^*$-correspondence}
\thanks{This research was supported by the Australian Research Council.}
\begin{document}

\begin{abstract}
We develop notions of a representation of a topological graph $E$ and of a covariant representation
of a topological graph $E$ which do not require the machinery of $C^*$-correspondences and
Cuntz-Pimsner algebras. We show that the $C^*$-algebra generated by a universal representation of
$E$ coincides with the Toeplitz algebra of Katsura's topological-graph bimodule, and that the
$C^*$-algebra generated by a universal covariant representation of $E$ coincides with Katsura's
topological graph $C^*$-algebra. We exhibit our results by constructing the isomorphism between the
$C^*$-algebra of a row-finite directed graph $E$ with no sources and the $C^*$-algebra of the
topological graph arising from the shift map acting on infinite path space $E^\infty$.
\end{abstract}

\maketitle

\section{Introduction}\label{sec:intro}

Let $E$ be a countable directed graph with vertex set $E^0$, edge set $E^1$ and range and source
maps $r, s : E^1 \to E^0$. The Toeplitz-Cuntz-Krieger algebra $\Tt C^*(E)$ is the universal
$C^*$-algebra generated by a family of mutually orthogonal projections $\{p_v : v \in E^0\}$ and a
family of partial isometries $\{s_e : e \in E^1\}$ such that
\begin{enumerate}
\item $s^*_e s_e = p_{s(e)}$ for every $e \in E^1$, and
\item $p_v \ge \sum_{e \in F} s_e s^*_e$ for every $v \in E^0$ and finite $F \subset
    r^{-1}(v)$.
\end{enumerate}
The Cuntz-Krieger algebra $C^*(E)$ is universal for families as above satisfying the additional
relation that $p_v = \sum_{r(e) = v} s_e s^*_e$ whenever $r^{-1}(v)$ is nonempty and finite.

These $C^*$-algebras have been studied very extensively over the last fifteen years, part of their
appeal being precisely that they can be defined, as above, in the space of a paragraph. Combined
with key structure theorems like the gauge-invariant uniqueness theorem and the Cuntz-Krieger
uniqueness theorem \cite{BatesHongEtAl:IJM02}, or Fowler, Laca and Raeburn's analogue of Coburn's
Theorem for the Toeplitz algebra of a directed graph \cite{FowlerRaeburn:IUMJ99}, the above
elementary presentation often makes it easy to verify that a given $C^*$-algebra is isomorphic to
$C^*(E)$ or $\Tt C^*(E)$ as appropriate.

Some years after the introduction of graph $C^*$-algebras, Katsura introduced topological graphs
and their $C^*$-algebras \cite{Katsura:TAMS04}. His construction is based on that of
\cite{Katsura:JFA04}, which in turn was a modification of Pimsner's construction of $C^*$-algebras
associated to $C^*$-correspondences in \cite{Pimsner:FIC97}. Roughly speaking, a topological graph
is like a directed graph except that $E^0$ and $E^1$ are locally compact Hausdorff spaces rather
than countable discrete sets, and $r$ and $s$ are required to be topologically well-behaved.
Building on Fowler and Raeburn's construction of a $C^*$-correspondence from a directed graph given
in \cite{FowlerRaeburn:IUMJ99}, Katsura associated to each topological graph $E$ a
$C^*$-correspondence $X(E)$, and defined the $C^*$-algebra of the topological graph $E$ to be the
$C^*$-algebra $\Oo_{X(E)}$ he had associated to the module $X(E)$ in \cite{Katsura:JFA04}.

The drawback of this approach is that the relations defining the $C^*$-algebra are quite
complicated and cannot be stated without first introducing at least the rudiments of
$C^*$-correspondences, which is quite a bit of overhead --- see Section~\ref{sec:Katsura}. Clearly
it would be handy to be able to circumvent all this technical overhead when defining and dealing
with the $C^*$-algebras of topological graphs. This paper takes the first step in this direction.
Our main theorems show that if $E$ is a topological graph, then the Toeplitz algebra $\Tt(E)$ and
Katsura's topological graph algebra $\Oo(E)$ of \cite{Katsura:TAMS04} can be described as
$C^*$-algebras universal for relatively elementary relations involving elements of $C_0(E^0)$ and
$C_c(E^1)$. In particular, our presentations can be stated without any reference to
$C^*$-correspondences. We state our results so that they apply to arbitrary topological graphs, but
we also show how our crucial covariance condition simplifies when the range map $r : E^1 \to E^0$
is a local homeomorphism and $r(E^1)$ is closed. Even this special situation is interesting: it
includes, for example, Cantor minimal systems, and all crossed products of abelian $C^*$-algebras
by $\ZZ$.

To emphasise how little background is needed to present our definition of a representation of a
topological graph and our covariance condition, we begin by stating all our key definitions and
main theorems in Section~\ref{sec:main results}. In Section~\ref{sec:Katsura} we recall Katsura's
construction of a $C^*$-correspondence from a topological graph and his definitions of the
associated $C^*$-algebras. In Section~\ref{sec:proofs}, we prove our main results. We finish in
Section~\ref{sec:example} by considering the topological graphs $\widehat{E}$ obtained from
row-finite directed graphs $E$ with no sources by taking $\widehat{E}^0 = \widehat{E}^1 = E^\infty$
and defining the range map to be the identity map, and the source map to be the left-shift map
$\sigma$. We apply our results to provide a relatively elementary proof that $\Oo(\widehat{E})$ is
canonically isomorphic to $C^*(E)$. This result could be recovered from \cite{Brownlowe, BRV}, but
working out the details provides a good example of the efficacy of our results.

\section{Main results}\label{sec:main results}

\begin{defn}[{\cite[Definition~2.1]{Katsura:TAMS04}}]
A quadruple $E=(E^0,E^1,r,s)$ is called a \emph{topological graph} if $E^0$, $E^1$ are locally
compact Hausdorff spaces, $r:E^1 \to E^0$ is a continuous map, and $s:E^1\to E^0$ is a local
homeomorphism.
\end{defn}

We think of $E^0$ as a space of vertices, and we think of each $e \in E^1$ as an arrow pointing
from $s(e)$ to $r(e)$. If $E^0$, $E^1$ are both countable and discrete, then $E$ is a directed
graph in the sense of \cite{KumjianPaskEtAl:JFA97, Raeburn:GraphAlg}.

Given $x \in C_c(E^1)$ and $f \in C_0(E^0)$, we define $x \cdot f$ and $f \cdot x$ in $C_c(E^1)$ by
\begin{equation}\label{eq:actions}
    (x\cdot f)(e)=x(e)f(s(e))\quad\text{ and } (f \cdot x)(e)=f(r(e))x(e).
\end{equation}

To define our notion of a representation of a topological graph, we need a couple of preliminary
ideas. An \emph{$s$-section} in a topological graph $E$ is a subset $U \subset E^1$ such that $s
\vert_U$ is a homeomorphism. An \emph{$r$-section} is defined similarly, and a \emph{bisection} is a set which is both an $s$-section and an
$r$-section.

If $x \in C_c(E^1)$ then $\osupp(x)$ denotes the precompact open set $\{e \in E^1 : x(e) \not=
0\}$, and $\supp(x)$ is the closure of $\osupp(x)$. If $x \in C_c(E^1)$ and $\osupp(x)$ is an
$s$-section, we define $\widehat{x} : E^0 \to \mathbb{C}$ by
\begin{equation}\label{eq:xhat}
\widehat{x}(s(e)) := \vert x(e) \vert^2 \text{ for $e \in \osupp(x)$, and }
    \widehat{x}(v) = 0 \text{ for $v \notin s(\osupp(x))$}.
\end{equation}

\begin{defn}\label{reduced Toep rep}
Let $E$ be a topological graph. A \emph{Toeplitz representation of $E$} in a $C^*$-algebra $B$ is a
pair $(\psi, \pi)$ where $\psi: C_c(E^1) \to B$ is a linear map, $\pi:C_0(E^0) \to B$ is a
homomorphism, and
\begin{enumerate}
\item\label{it:actions} $\psi(f \cdot x)=\pi(f)\psi(x)$, for all $x \in C_c(E^1)$, $f \in
    C_0(E^0)$;
\item\label{it:s*s} for $x \in C_c(E^1)$ such that $\supp(x)$ is contained in an open
    $s$-section, $\pi(\widehat{x})=\psi(x)^* \psi(x)$; and
\item\label{it:s*t} for $x$, $y \in C_c(E^1)$ such that $\supp(x)$ and $\supp(y)$ are contained
    in disjoint open $s$-sections, $\psi(x)^* \psi(y)=0$.
\end{enumerate}
We say that a Toeplitz representation $(\psi, \pi)$ of $E$ in $B$ is \emph{universal} if for any
Toeplitz representation $(\psi',\pi')$ of $E$ in $C$, there exists a homomorphism $h : B \to C$,
such that $h \circ \psi = \psi'$ and $h \circ \pi = \pi'$.
\end{defn}

\begin{rmk}\label{alternative def of reduced toep rep}
Suppose that $x \in C_c(E^1)$ and $\supp(x)$ is contained in an open $s$-section $U$. Since $E^1$
is locally compact, we may cover $\supp(x)$ with precompact open sets $\{V_i : i \in I\}$, and
since $\supp(x)$ is compact, we may pass to a finite subcover $\{V_i : i \in F\}$. Then each $V_i
\cap U$ is precompact and open, so $\bigcup_{i \in F} (V_i \cap U)$ is a precompact open
$s$-section containing $\supp(x)$. Thus we may assume in Conditions
\eqref{it:s*s}~and~\eqref{it:s*t} that the open $s$-sections containing $\supp(x)$ and $\supp(y)$
are precompact.
\end{rmk}

To state our first main theorem, recall that if $E$ is a topological graph, then $\mathcal{T}(E)$
denotes the Toeplitz algebra of Katsura's topological-graph bimodule (see Notation~\ref{def of T(E)
and O(E)}).

\begin{thm}\label{thm:mainToeplitz}
Let $E$ be a topological graph. Then there is a universal Toeplitz representation $(i_1, i_0)$ of
$E$ which generates $\mathcal{T}(E)$. Moreover the $C^*$-algebra generated by the image of any universal Toeplitz representation of $E$ is isomorphic to $\mathcal{T}(E)$.
\end{thm}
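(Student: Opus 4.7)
The plan is to realise our universal Toeplitz representation inside Katsura's $\mathcal{T}(E)$. Katsura's construction exhibits a universal Toeplitz representation $(t, \pi_A)$ of the correspondence $X(E)$ generating $\mathcal{T}(E)$, and since $C_c(E^1)$ sits densely in $X(E)$, I propose setting $i_0 := \pi_A$ and $i_1 := t\vert_{C_c(E^1)}$. To verify $(i_1, i_0)$ is a Toeplitz representation of $E$ in our sense, condition~\eqref{it:actions} is immediate from bimodule compatibility, and conditions~\eqref{it:s*s} and~\eqref{it:s*t} reduce to the direct computation that the $C_0(E^0)$-valued inner product on $X(E)$ satisfies $\langle x, x \rangle_A = \widehat{x}$ whenever $\osupp(x)$ is an $s$-section (the defining sum over the fibre of $s$ contains at most one nonzero term) and $\langle x, y \rangle_A = 0$ when $\supp(x), \supp(y)$ lie in disjoint $s$-sections. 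That $(i_1, i_0)$ generates $\mathcal{T}(E)$ then follows from density of $C_c(E^1)$ in $X(E)$, because $\mathcal{T}(E)$ is generated by $t(X(E)) \cup \pi_A(C_0(E^0))$.

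The heart of the argument is universality. Given a Toeplitz representation $(\psi', \pi')$ of $E$ in $C$, the plan is to extend $\psi'$ continuously to a genuine correspondence Toeplitz representation $t' \colon X(E) \to C$ compatible with $\pi'$; the universal property of $\mathcal{T}(E)$ as Toeplitz algebra of $X(E)$ then delivers the desired homomorphism $h \colon \mathcal{T}(E) \to C$ with $h \circ i_1 = \psi'$ and $h \circ i_0 = \pi'$. The two essential analytic steps are the inner-product identity $\psi'(x)^* \psi'(y) = \pi'(\langle x, y \rangle_A)$ for all $x, y \in C_c(E^1)$, and the norm estimate $\|\psi'(x)\|^2 \leq \|\langle x, x\rangle_A\|_\infty = \|x\|_{X(E)}^2$, the latter being a consequence of the former via the $C^*$-identity. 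Continuous extension and verification of the remaining bimodule relations are then routine from density.

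I expect the inner-product identity to be the main obstacle: the hypotheses only constrain $\psi'$ on functions supported in a single $s$-section, whereas $\langle x, y \rangle_A(v) = \sum_{e \in s^{-1}(v)} \overline{x(e)}y(e)$ aggregates contributions from potentially multiple preimages. The plan is a partition-of-unity argument. Since $s$ is a local homeomorphism and $\supp(x), \supp(y)$ are compact, they can be covered by finitely many precompact open $s$-sections $\{U_i\}$; subordinate to this cover I write $x = \sum x_i$, $y = \sum y_j$, expanding $\psi'(x)^* \psi'(y) = \sum_{i,j} \psi'(x_i)^* \psi'(y_j)$ by bilinearity. Each pair falls into one of two clean cases: if $\supp(x_i) \cup \supp(y_j)$ is contained in one $s$-section, polarisation of~\eqref{it:s*s} gives the inner-product identity; if $\supp(x_i), \supp(y_j)$ lie in disjoint $s$-sections, \eqref{it:s*t} handles it. Intermediate configurations are reduced to these via a secondary subdivision, using Hausdorffness of $E^1$ and continuous cutoffs to separate non-coincident edges that share a common $s$-image. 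Summing the contributions and matching against the fibrewise definition of $\langle x, y \rangle_A$ completes the identity.

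The ``moreover'' clause is formal: if $(\psi, \pi)$ is any other universal Toeplitz representation of $E$ generating a $C^*$-algebra $B$, then applying the universal property of $(i_1, i_0)$ to $(\psi, \pi)$ and of $(\psi, \pi)$ to $(i_1, i_0)$ produces mutually inverse $*$-homomorphisms between $\mathcal{T}(E)$ and $B$, forcing a canonical isomorphism.
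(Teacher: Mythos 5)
Your proposal is correct and follows essentially the same route as the paper: take $(i_1,i_0)$ to be the restriction to $C_c(E^1)$ of the universal Toeplitz representation of $X(E)$, and obtain universality by extending any $(\psi',\pi')$ to a Toeplitz representation of $X(E)$, the crux being the identity $\psi'(x)^*\psi'(y)=\pi'(\langle x,y\rangle_{C_0(E^0)})$ proved via polarisation on a single $s$-section plus a partition-of-unity argument (the paper's Lemma~\ref{polar identity} and Propositions~\ref{property of reduced Toep rep} and~\ref{reduced Toep rep is Toep rep}). The only point where you are sketchier than the paper is the ``secondary subdivision'' handling pairs of pieces whose supports neither lie in a common $s$-section nor admit disjoint $s$-sections; the paper's Proposition~\ref{property of reduced Toep rep} carries out exactly this refinement, and your outline is consistent with it.
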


\begin{rmk}
Since the map $\psi$ occurring in a Toeplitz representation of $E$ is not a homomorphism, it will
not usually be norm-decreasing with respect to the supremum norm. So it is not clear that one can
just extend by continuity a linear map $\psi_0$ defined on a dense subspace of $C_c(E^1)$. We show
in Proposition~\ref{cor:checkable toeplitz} how to get around this difficulty: the map $\psi$
\emph{is} norm-decreasing with respect to the supremum norm when applied to functions supported on
$s$-sections.
\end{rmk}

We now describe the covariance condition for a Toeplitz representation of a topological graph. The
condition is somewhat technical, but we will indicate how it simplifies under additional hypotheses
in Corollary~\ref{covariant condition of top graph with local homeomorphic range map}.

We first need a little notation from \cite{Katsura:TAMS04}.

\begin{defn}[{\cite[Definition~2.6]{Katsura:TAMS04}}]
Let $E$ be a topological graph. We define
\begin{enumerate}
\item $E_{\mathrm{sce}}^0=E^0 \setminus \overline{r(E^1)}$.
\item $E_{\mathrm{fin}}^0 = \{v \in E^0 : {}$ there exists a neighbourhood $N$  of $v$ such
    that $r^{-1}(N)$ is compact$\}$.
\item $E_{\mathrm{rg}}^0=E_{\mathrm{fin}}^0 \setminus \overline{E_{\mathrm{sce}}^0}$.
\end{enumerate}
\end{defn}

\begin{rmk}\label{properties of special subsets of E^0}
The set $E_{\mathrm{fin}}^0$ is open in $E^0$. To see this, fix $v \in E_{\mathrm{fin}}^0$. Then
there exists a neighbourhood $U$ of $v$ such that $r^{-1}(U)$ is compact and there exists an open
neighbourhood $V$ of $v$ contained in $U$. So $V \subset E_{\mathrm{fin}}^0$, whence
$E_{\mathrm{fin}}^0$ is open. It follows that $E_{\mathrm{sce}}^0$, $E_{\mathrm{fin}}^0$,
$E_{\mathrm{rg}}^0$ are all open in $E^0$. Moreover $E_{\mathrm{rg}}^0$ is the intersection of
$E_{\mathrm{fin}}^0$ with the interior of $\overline{r(E^1)}$. Finally, as proved by Katsura (see
\cite[Lemma~1.23]{Katsura:TAMS04}), for any compact subset $K \subset E_{\mathrm{fin}}^0$, the set
$r^{-1}(K)$ is compact in $E^1$.
\end{rmk}

\begin{ntn}
Let $X$ be a locally compact Hausdorff space and $U$ be an open subset of $X$. Then the standard
embedding $\iota_U$ of $C_0(U)$ as an ideal of $C_0(X)$ is given by
\[
\iota_U(f)(x) := \begin{cases} f(x) &\text{ if $x \in U$} \\ 0 &\text{ otherwise.}\end{cases}
\]
We will usually suppress the map $\iota_U$ and just identify each of $C_0(U)$ and $C_c(U)$ with
their images in $C_0(X)$ under $\iota_U$. That is, we think of $C_0(U)$ as an ideal of $C_0(X)$,
and we regard $C_c(U)$ as an (algebraic) ideal of $C_0(X)$.
\end{ntn}

\begin{rmk}\label{POU on subspace}
Let $X$ be a Hausdorff space, and let $K \subset X$ be compact. Fix a finite cover $\Nn$ of $K$ by
open subsets of $X$. By \cite[Chapter~5.W]{Kelley:GenTop}, there exists a partition of unity $\{h_N
: N \in \Nn\}$ on $K$ subordinate to $\{N \cap K: N \in \mathcal{N} \}$, where $h_N(x) \in [0,1]$,
for all $N \in \mathcal{N}$, $x \in K$. Since $X$ may not be normal, we cannot necessarily extend
this to a partition of unity on $X$. Nevertheless, if $f \in C_c(X)$ with $\supp(f) \subset K$,
then the functions $f_N : X \to \CC$ given by
\begin{equation}\label{eq:f_N}
f_N(x) := \begin{cases}
    f(x) h_N(x) &\text{ if $x \in K$} \\
    0 &\text{ otherwise}
\end{cases}
\end{equation}
belong to $C_c(X)$. Hence each $\osupp(f_N) \subset N \cap K$, and $\sum_{N \in \mathcal{N}}f_N=
f$.
\end{rmk}

\begin{defn}\label{dfn:new covariance}
Let $E$ be a topological graph, and let $(\psi,\pi)$ be a Toeplitz representation of $E$ in a
$C^*$-algebra $B$. We call $(\psi,\pi)$ \emph{covariant} if there exists a collection $\mathcal{G}
\subset C_c(E^0_{\textrm{rg}})$ of nonnegative functions which generates $C_0(E^0_{\textrm{rg}})$
as a $C^*$-algebra, and for each $f \in \mathcal{G}$ there exist a finite cover $\Nn_f$ of
$r^{-1}(\supp(f))$ by open $s$-sections, and a collection of nonnegative continuous functions
$\{f_N: N \in \mathcal{N}_f\}$ such that
\begin{enumerate}
\item\label{osupp(f_N) is contained in N intersects r^{-1}(supp(f))} $\osupp(f_N) \subset N \cap r^{-1}(\supp(f))$ for all $N \in \mathcal{N}_f$;
\item\label{sum of f_N=f circ r} $\sum_{N \in \mathcal{N}_f}f_N=f \circ r$; and
\item\label{CK-relation} $\pi(f) = \sum_{N \in \mathcal{N}_f} \psi\big(\sqrt{f_N}\big) \psi\big(\sqrt{f_N}\big)^*$.
\end{enumerate}
We say that a covariant Toeplitz representation $(\psi, \pi)$ of $E$ in $B$ is \emph{universal} if
for any covariant Toeplitz representation $(\psi',\pi')$ of $E$ in $C$, there exists a homomorphism
$h : B \to C$, such that $h \circ \psi = \psi'$ and $h \circ \pi = \pi'$.
\end{defn}

\begin{rmk}
Definition~\ref{dfn:new covariance} is formulated so as to make it easy to check that a given pair
$(\psi,\pi)$ is covariant. However, when \emph{using} covariance of a pair $(\psi,\pi)$ it is
helpful to know that \eqref{CK-relation} of Definition~\ref{dfn:new covariance} holds for every $f
\in C_c(E^0_{\textrm{rg}})$, every $\mathcal{N}_f$, and every $\{f_N:\ N \in \mathcal{N}_f \}$
satisfying \eqref{osupp(f_N) is contained in N intersects r^{-1}(supp(f))} and \eqref{sum of f_N=f
circ r} of Definition~\ref{dfn:new covariance}. We prove this in Proposition~\ref{reduced covariant
Toep rep is covariant Toep rep}.
\end{rmk}

\begin{rmk}
In Definition~\ref{dfn:new covariance}, we implicitly used Remark~\ref{POU on subspace} because
$r^{-1}(\supp(f))$ is compact by Remark~\ref{properties of special subsets of E^0}. Observe that
the covariance condition for a Toeplitz representation of $E$ only involves functions in
$C_0(E^0_{\textrm{rg}}) \subset C_0(E^0)$.
\end{rmk}

To state our second main theorem, recall that if $E$ is a topological graph, then $\mathcal{O}(E)$
denotes Katsura's topological graph $C^*$-algebra (see Notation~\ref{def of T(E) and O(E)}).

\begin{thm}\label{thm:mainCK}
Let $E$ be a topological graph. Then there is a universal covariant Toeplitz representation
$(j_1,j_0)$ of $E$ which generates $\mathcal{O}(E)$. Moreover the $C^*$-algebra generated by the
image of any universal covariant Toeplitz representation of $E$ is isomorphic $\mathcal{O}(E)$.
\end{thm}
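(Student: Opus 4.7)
The plan is to derive Theorem~\ref{thm:mainCK} from Theorem~\ref{thm:mainToeplitz} by exhibiting $\Oo(E)$ as a quotient of $\Tt(E)$ and showing that our covariance condition corresponds precisely to Katsura's Cuntz--Pimsner covariance condition. Let $q : \Tt(E) \to \Oo(E)$ denote the canonical quotient map, take $(i_1, i_0)$ as in Theorem~\ref{thm:mainToeplitz}, and set $(j_1, j_0) := (q \circ i_1, q \circ i_0)$. The goal splits into two tasks: (a) verify that $(j_1, j_0)$ is covariant in the sense of Definition~\ref{dfn:new covariance}; and (b) show that every covariant Toeplitz representation $(\psi,\pi)$ in a $C^*$-algebra $B$ factors through $q$ via the homomorphism $\tilde{h} : \Tt(E) \to B$ provided by Theorem~\ref{thm:mainToeplitz}.

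Both tasks rest on the pointwise identity in $\Kk(X(E))$
\begin{equation*}
\phi(f) = \sum_{N \in \Nn_f} \Theta_{\sqrt{f_N},\, \sqrt{f_N}},
\end{equation*}
valid for any $f \in C_c(E^0_{\mathrm{rg}})^+$ and any $\Nn_f$, $\{f_N\}$ satisfying Conditions \eqref{osupp(f_N) is contained in N intersects r^{-1}(supp(f))} and \eqref{sum of f_N=f circ r} of Definition~\ref{dfn:new covariance}. Here $\phi$ denotes Katsura's left action of $C_0(E^0)$ on $X(E)$ and $\Theta_{x,y}z = x \cdot \langle y, z\rangle_{X(E)}$. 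Since each $N$ is an $s$-section, for $e \in N$ the inner product collapses to $\langle \sqrt{f_N}, z\rangle_{X(E)}(s(e)) = \sqrt{f_N(e)}\, z(e)$, so $\Theta_{\sqrt{f_N}, \sqrt{f_N}}$ acts on $z \in C_c(E^1)$ as pointwise multiplication by $f_N$; summing and using $\sum_N f_N = f \circ r$ yields $\phi(f)z$. That such $\Nn_f$ and $\{f_N\}$ exist is guaranteed by Remarks~\ref{properties of special subsets of E^0} and \ref{POU on subspace}, because $r^{-1}(\supp(f))$ is compact.

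Applying the induced homomorphism $\psi^{(1)} : \Kk(X(E)) \to B$, which sends $\Theta_{x,y}$ to $\psi(x)\psi(y)^*$ for $x$, $y$ supported in a common $s$-section, converts the rank-one identity into $\psi^{(1)}(\phi(f)) = \sum_N \psi(\sqrt{f_N})\psi(\sqrt{f_N})^*$. Claim~(a) follows at once: Katsura's covariance $i_0(f) = i_1^{(1)}(\phi(f))$ holds modulo $\ker q$, so applying $q$ gives $j_0(f) = \sum_N j_1(\sqrt{f_N})j_1(\sqrt{f_N})^*$, and surjectivity of $q$ ensures that $(j_1, j_0)$ generates $\Oo(E)$. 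For claim~(b), running the identity in reverse shows that our covariance forces $\pi(f) = \psi^{(1)}(\phi(f))$ for every $f$ in the generating set $\Gg$. Since $f \mapsto \pi(f)$ and $f \mapsto \psi^{(1)}(\phi(f))$ are both $*$-homomorphisms $C_0(E^0_{\mathrm{rg}}) \to B$, the set on which they agree is a $C^*$-subalgebra that contains $\Gg$ and so equals all of $C_0(E^0_{\mathrm{rg}})$; hence $\tilde{h}$ annihilates $\ker q$ and descends to the required map $\Oo(E) \to B$. The ``moreover'' assertion is then a formal consequence of the universal property applied in both directions. The main obstacle I anticipate is the bookkeeping that identifies our Toeplitz representations with Katsura's --- so that $\psi$ genuinely induces the expected $\psi^{(1)}$ on $\Kk(X(E))$ --- together with a clean verification of the rank-one decomposition of $\phi(f)$.
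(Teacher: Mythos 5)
Your proposal is correct and follows essentially the same route as the paper: the core content --- the rank-one decomposition $\phi(f)=\sum_N \Theta_{\sqrt{f_N},\sqrt{f_N}}$ for $f$ supported in $E^0_{\mathrm{rg}}$ (the paper's Proposition~\ref{compute of phi^{-1}(K(X(E)))}), the conversion via $\psi^{(1)}$ between Definition~\ref{dfn:new covariance} and Katsura's covariance, and the observation that agreement of the two homomorphisms on the generating set $\mathcal{G}$ propagates to all of $C_0(E^0_{\mathrm{rg}})$ --- is exactly what the paper establishes in Propositions~\ref{reduced covariant Toep rep is covariant Toep rep} and~\ref{the restriction of covariant Toep rep is reduced covariant Toep rep}. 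Your framing via the quotient map $q:\Tt(E)\to\Oo(E)$ is a cosmetic variant of the paper's framing via a bijection between covariant representations of $E$ and of $X(E)$ followed by the universal property of $\Oo_{X(E)}$.
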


Although Definition~\ref{dfn:new covariance} looks complicated, the hypotheses are relatively easy
to check in specific instances. To give some intuition, we indicate how the definition simplifies
if the range map $r : E^1 \to E^0$ is a local homeomorphism and $r(E^1)$ is closed. This situation
still includes many interesting examples.

\begin{defn}\label{def:r-fibration}
Let $E$ be a topological graph. A pair $(\mathcal{U}, V)$ is called a \emph{local $r$-fibration} if
$V$ is a subset of $E^0$, and $\mathcal{U}$ is a finite collection of mutually disjoint bisections
such that $r^{-1}(V)=\bigcup_{U \in \mathcal{U}}U$, and $r(U)=V$ for each $U \in \mathcal U$. A
local $r$-fibration is \emph{precompact} if each $U \in \mathcal{U}$ is precompact and $V$ is
precompact. A local $r$-fibration is \emph{open} if each $U \in \mathcal{U}$ is open and $V$ is
open.
\end{defn}

Suppose that $(\mathcal{U}, V)$ is an open local $r$-fibration. Suppose that $U \in \mathcal{U}$ and that
$f \in C_c(V)$. We write $r^*_U f \in C_c(U)$ for the function
\[
    r^*_Uf : e \mapsto \begin{cases}
        f(r(e)) &\text{ if $e \in U$}\\
        0 &\text{ otherwise}.
    \end{cases}
\]

\begin{cor}\label{covariant condition of top graph with local homeomorphic range map}
Let $E$ be a topological graph. Suppose that $r$ is a local homeomorphism and $r(E^1)$ is closed. Let $(\psi,\pi)$ be a Toeplitz representation of $E$. Then $(\psi,\pi)$ is covariant if and only if there exists a collection $\mathcal{G} \subset C_c(E^0_{\textrm{rg}})$ of nonnegative functions  which generates $C_0(E^0_{\textrm{rg}})$ as a $C^*$-algebra, and for each $f \in \mathcal{G}$ there exists an open local $r$-fibration $(\mathcal{U},V)$ such that $\supp(f) \subset V$, and
\begin{equation}\label{CK-relation when r is loc homeo and r(E^1) is closed}
\pi(f) = \sum_{U \in \mathcal{U}} \psi\big(\sqrt{r^*_U f}\big) \psi\big(\sqrt{r^*_U f}\big)^*.
\end{equation}
If $(\psi,\pi)$ is covariant, then \emph{Equation~\ref{CK-relation when r is loc homeo and r(E^1) is closed}} holds for every $f \in C_c(E^0_{\textrm{rg}})$ and open local $r$-fibration $(\mathcal{U},V)$ with $\supp(f) \subset V$.
\end{cor}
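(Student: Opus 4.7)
The forward direction is a direct unpacking of Definition~\ref{dfn:new covariance}: given the simplified condition with generating set $\Gg$ and, for each $f \in \Gg$, an open local $r$-fibration $(\Uu, V)$ with $\supp(f) \subset V$, take $\Nn_f := \Uu$ and $f_N := r^*_N f$. Since the bisections in $\Uu$ are mutually disjoint with union $r^{-1}(V) \supseteq r^{-1}(\supp(f))$, a direct calculation yields $\osupp(r^*_N f) \subset N \cap r^{-1}(\supp(f))$ and $\sum_{N \in \Nn_f} r^*_N f = f \circ r$, which are Conditions~\eqref{osupp(f_N) is contained in N intersects r^{-1}(supp(f))} and \eqref{sum of f_N=f circ r} of Definition~\ref{dfn:new covariance}. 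Since $\sqrt{r^*_N f} = r^*_N \sqrt{f}$, Condition~\eqref{CK-relation} becomes exactly \eqref{CK-relation when r is loc homeo and r(E^1) is closed}, so $(\psi,\pi)$ is covariant.

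For the reverse direction and the ``moreover'' assertion, the main tool is Proposition~\ref{reduced covariant Toep rep is covariant Toep rep}, which upgrades covariance so that \eqref{CK-relation} holds for every $f \in C_c(E^0_{\mathrm{rg}})$ and every finite cover $\Nn_f$ and nonnegative family $\{f_N\}$ satisfying \eqref{osupp(f_N) is contained in N intersects r^{-1}(supp(f))} and \eqref{sum of f_N=f circ r}. Choosing $f_N := r^*_N f$ for any open local $r$-fibration $(\Uu, V)$ with $\supp(f) \subset V$ produces such a family (by the computation in the previous paragraph), and \eqref{CK-relation when r is loc homeo and r(E^1) is closed} follows; this also proves the ``moreover'' statement. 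To obtain a generating set $\Gg$ for the simplified condition, it suffices to show that the open sets $V$ appearing in open local $r$-fibrations form a basis for the topology on $E^0_{\mathrm{rg}}$: the nonnegative functions in $C_c(E^0_{\mathrm{rg}})$ supported in such a $V$ then generate $C_0(E^0_{\mathrm{rg}})$ as a $C^*$-algebra via partitions of unity.

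Constructing a basic $V$ containing a given $v \in E^0_{\mathrm{rg}}$ and lying inside a given open neighbourhood $W$ of $v$ is the main technical step. Since $v \in E^0_{\mathrm{rg}} \subset E^0_{\mathrm{fin}}$ and $r(E^1)$ is closed, Remark~\ref{properties of special subsets of E^0} shows $v \in \mathrm{int}(r(E^1))$, so $r^{-1}(v)$ is nonempty; applying that remark with $K = \{v\}$, $r^{-1}(v)$ is a compact discrete set, hence finite, say $r^{-1}(v) = \{e_1, \ldots, e_n\}$. Choose pairwise disjoint open bisections $\tilde U_i \ni e_i$ and a precompact open neighbourhood $V_0$ of $v$ with $\overline{V_0} \subset W \cap E^0_{\mathrm{fin}} \cap \bigcap_i r(\tilde U_i)$. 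Since $\overline{V_0}$ is a compact subset of $E^0_{\mathrm{fin}}$, Remark~\ref{properties of special subsets of E^0} gives that $r^{-1}(\overline{V_0})$ is compact, so $C := r\big(r^{-1}(\overline{V_0}) \setminus \bigsqcup_i \tilde U_i\big)$ is a compact subset of $\overline{V_0}$ not containing $v$. Setting $V := V_0 \setminus C$ and $U_i := \tilde U_i \cap r^{-1}(V)$ removes all ``excess'' preimages, so $r^{-1}(V) = \bigsqcup_i U_i$ with each $U_i$ an open bisection mapping onto $V$, and $(\{U_i\}, V)$ is the desired open local $r$-fibration with $V \subset W$.
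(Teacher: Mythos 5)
Your argument is correct and has the same overall architecture as the paper's proof: the implication from the displayed relation to covariance is exactly the content of Remark~\ref{converse statement proof of covariant condition} (take $\Nn_f=\Uu$ and $f_N=r^*_Nf$, and check conditions \eqref{osupp(f_N) is contained in N intersects r^{-1}(supp(f))} and \eqref{sum of f_N=f circ r} of Definition~\ref{dfn:new covariance} directly); the converse and the final ``moreover'' assertion both rest on Proposition~\ref{reduced covariant Toep rep is covariant Toep rep}; and the real work is producing enough open local $r$-fibrations to generate $C_0(E^0_{\mathrm{rg}})$, which is the paper's Lemmas \ref{existence of precompact open local r-fibration} and \ref{nonnegative functions supported by local r-fibration generates C_0(E_rg^0)}. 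The one place you genuinely diverge is in the construction of a local $r$-fibration around a given $v\in E^0_{\mathrm{rg}}$. The paper fixes disjoint precompact open bisections $U_e$ around the finitely many points of $r^{-1}(v)$ with common range $N$ and then argues by contradiction: if no suitable $V$ existed there would be a net $v_\alpha\to v$ with extra preimages $e_\alpha$ outside $\bigcup_e U_e$, and compactness of $r^{-1}(\overline N)$ yields a convergent subnet whose limit contradicts $r^{-1}(v)\subset\bigcup_e U_e$. You instead delete the set $C=r\big(r^{-1}(\overline{V_0})\setminus\bigsqcup_i\tilde U_i\big)$, which is compact (hence closed) and omits $v$, so that $V=V_0\setminus C$ satisfies $r^{-1}(V)=\bigsqcup_i U_i$ by inspection. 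Your version is constructive, avoids the net/subnet extraction, and gives the marginally stronger conclusion that such $V$ form a base for the topology on $E^0_{\mathrm{rg}}$ inside any prescribed neighbourhood $W$ (the paper only establishes, and only needs, that every point of $E^0_{\mathrm{rg}}$ admits some such neighbourhood, which already suffices for the partition-of-unity step in Lemma~\ref{nonnegative functions supported by local r-fibration generates C_0(E_rg^0)}). Both arguments are valid and of comparable length; neither buys anything essential over the other for this corollary.
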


\begin{rmk}\label{converse statement proof of covariant condition}
Let $E$ be a topological graph. Fix $f \in C_c(E^0_{\textrm{rg}})$ and suppose that
$(\mathcal{U},V)$ is an open local $r$-fibration such that $\supp(f) \subset V$. By
Remark~\ref{properties of special subsets of E^0}, $r^{-1}(\supp(f))$ is compact. Since
$\mathcal{U}$ is an open cover of $r^{-1}(\supp(f))$ by disjoint open bisections, we apply Remark~\ref{POU on subspace} to obtain functions $\{f_U: U \in \mathcal{U}\}$ such that $\osupp(f_U) \subset U  \cap r^{-1}(\supp(f))$, and $\sum_{U \in \mathcal{U}}f_U=f \circ r$, for all $U \in \mathcal{U}$. We then have $r^*_U f = f_U$, for all $U \in \mathcal{U}$.
\end{rmk}

\section{\texorpdfstring{$C^*$}{C*}-correspondences, \texorpdfstring{$C^*$}{C*}-algebras and Katsura's construction}\label{sec:Katsura}

We recall some background on Hilbert $C^*$-modules. For more detail see \cite{RaeburnWilliams:Morita}.

\begin{defn}
Let $A$ be a $C^*$-algebra. A \emph{right Hilbert $A$-module} is a right $A$-module $X$ equipped
with a map $\langle \cdot, \cdot \rangle_A : X \times X \to A$ such that for $x,y \in X$, $a \in A$,
\begin{enumerate}
    \item $\langle x,x \rangle_A \geq 0$ with equality only if $x = 0$;
    \item \label{A-linear in the second variable} $\langle x,y \cdot a\rangle_A=\langle x,y\rangle_Aa$;
    \item\label{it:adjoint of ip}$\langle x,y\rangle_A = \langle y,x\rangle_A^*$; and
    \item $X$ is complete in the norm\footnote{It is not supposed to be immediately obvious
        that this defines a norm, but it is true (see \cite{RaeburnWilliams:Morita}).} defined
        by $\|x\|_A ^2 = \|\langle x, x\rangle_A\|$.
\end{enumerate}

Recall from, for example, \cite[Page~72]{Raeburn:GraphAlg}, that a right Hilbert $A$-module $X$ is
a \emph{$C^*$-correspondence} over $A$ if there is a left action of $A$ on $X$ such that
\begin{equation}\label{eq:adjointable left action}
    \langle a^* \cdot y,x \rangle_A=\langle y,a\cdot x\rangle_A \quad\text{ for all $a \in A$, $x$, $y \in X$.}
\end{equation}
\end{defn}

An operator $T:X \to X$ is \emph{adjointable} if there exists $T^*:X \to X$ such that $\langle
T^*y,x\rangle_A=\langle y,Tx\rangle_A$, for all $x$, $y \in X$. The adjoint $T^*$ is unique, and
$T$ is automatically bounded and linear. The set $\mathcal{L}(X)$ of adjointable operators on $X$
is a $C^*$-algebra. Equation~\eqref{eq:adjointable left action} implies that there is a
homomorphism $\phi : A \to \Ll(X)$ such that $\phi(a)x = a\cdot x$ for all $a \in A$ and $x \in X$.

\begin{defn}
Let $A$ be a $C^*$-algebra and let $X$ be a right Hilbert $A$-module. Fix $x$, $y \in X$, we define
$\Theta_{x,y}:X \to X$ by $\Theta_{x,y}(z)=x\cdot \langle y,z \rangle_A$, for all $z \in X$. We
define $\Kk(X) := \clsp\{\Theta_{x,y} : x,y \in X\}$.
\end{defn}
A calculation shows that $T\Theta_{x,y}=\Theta_{Tx,y}$, and $\Theta_{x,y}^*=\Theta_{y,x}$, for all $x$, $y
\in X$, $T \in \mathcal{L}(X)$. Hence $\mathcal{K}(X)$ is a closed two-sided ideal of $\mathcal{L}(X)$.

Toeplitz representations and Toeplitz algebras of $C^*$-correspondences were introduced and studied
in \cite{Pimsner:FIC97}. Cuntz-Pimsner algebras were also introduced in \cite{Pimsner:FIC97}, but
the definition was later modified by Katsura in \cite[Definition~3.5]{Katsura:JFA04} so as to include
graph algebras as a special case \cite[Example~8.13]{Raeburn:GraphAlg}. We use Katsura's definition in this paper.

\begin{defn}[{\cite[Definition~2.1]{Katsura:JFA04}}]\label{def of Toep rep of correspondence}
Let $A$ be a $C^*$-algebra and let $X$ be a $C^*$-cor\-res\-pond\-ence over $A$. A \emph{Toeplitz
representation} of $X$ in a $C^*$-algebra $B$ is a pair $(\psi,\pi)$, where $\psi:X \to B$ is a
linear map, $\pi:A \to B$ is a homomorphism, and for any $x$, $y \in X$ and $a \in A$,
\begin{enumerate}
    \item $\psi(a \cdot x)=\pi(a)\psi(x)$; and
    \item $\psi(x)^*\psi(y)=\pi(\langle x,y \rangle_A)$.
\end{enumerate}
As in \cite[Proposition~1.3]{FowlerRaeburn:IUMJ99}, we say that Toeplitz representation
$(\psi,\pi)$ of $X$ in $B$ is \emph{universal} if for any Toeplitz representation $(\psi',\pi')$ of
$X$ in $C$, there exists a homomorphism $h : B \to C$, such that $h \circ \psi=\psi'$, and $h \circ
\pi = \pi'$.
\end{defn}

\begin{rmk}[{\cite[Page~370]{Katsura:JFA04}}]\label{property of Toep rep}
Let $X$ be a $C^*$-correspondence over a $C^*$-algebra $A$ and let $(\psi,\pi)$ be a Toeplitz
representation of $X$. Condition~$(2)$ of Definition~\ref{def of Toep rep of correspondence} and
that $\pi$ is a homomorphism imply that $\psi(x \cdot a)=\psi(x)\pi(a)$, for all $x \in X$, $a \in
A$, and also that $\psi$ is bounded with $\Vert \psi \Vert \leq 1$.
\end{rmk}

Proposition~1.3 of \cite{FowlerRaeburn:IUMJ99} implies that there exists a $C^*$-algebra
$\mathcal{T}_X$ generated by the image of a universal Toeplitz representation $(i_X,i_A)$ of $X$.
This $C^*$-algebra is unique up to canonical isomorphism and we call it the \emph{Toeplitz algebra}
of $X$. Given another Toeplitz representation $(\psi,\pi)$ of $X$ in a $C^*$-algebra $B$, we write
$h_{\psi,\pi}$ for the induced homomorphism from $\Tt_X$ to $B$.

Recall from \cite[Page~202]{Pimsner:FIC97} (see also \cite[Proposition~1.6]{FowlerRaeburn:IUMJ99})
that if $(\psi,\pi)$ is a Toeplitz representation of a $C^*$-correspondence $X$ in a $C^*$-algebra
$B$, then there is a unique homomorphism $\psi^{(1)} : \Kk(X) \to B$ such that
$\psi^{(1)}(\Theta_{x,y}) =\psi(x)\psi(y)^*$ for all $x$, $y \in X$.

Recall also that given a $C^*$-algebra $A$ and a closed two-sided ideal $J$ of $A$, we define
$J^{\perp} := \{a \in A:ab=0\text{ for all }b \in J\}$. Then $J^{\perp}$ is also a closed two-sided
ideal of $A$.

\begin{defn}[{\cite[Definitions~3.4 and 3.5]{Katsura:JFA04}}]\label{dfn:O_X}
Let $A$ be a $C^*$-algebra, let $X$ be a $C^*$-correspondence over $A$, and write $\phi : A \to
\Ll(X)$ for the homomorphism implementing the left action. A Toeplitz representation $(\psi,\pi)$
of $X$ is \emph{covariant} if $\psi^{(1)}(\phi(a))=\pi(a)$ for all $a \in \phi^{-1}(\Kk(X)) \cap
(\ker\phi)^{\perp}$.

A covariant Toeplitz representation $(\psi,\pi)$ of $X$ in $B$ is \emph{universal} if for any
covariant Toeplitz representation $(\psi',\pi')$ of $X$ in $C$, there exists a homomorphism $h$
from $B$ into $C$, such that $h \circ \psi=\psi'$, and $h \circ \pi = \pi'$.
\end{defn}

Recall from \cite[Page~75]{Raeburn:GraphAlg} that there exists a $C^*$-algebra $\mathcal{O}_X$
generated by the image of a universal covariant Toeplitz representation $(j_X,j_A)$ of $X$. This
$C^*$-algebra is unique up to canonical isomorphism. Given another covariant Toeplitz
representation $(\psi,\pi)$ of $X$ in a $C^*$-algebra $B$, we write $\psi\times \pi$ for the
induced homomorphism from $\mathcal{O}_X$ to $B$.

Let $E$ be a topological graph. As in \cite{Katsura:TAMS04}, define right and left actions of
$C_0(E^0)$ on $C_c(E^1)$ by Equation~\eqref{eq:actions}. For $x_1$, $x_2 \in C_c(E^1)$, define $\langle
x_1,x_2 \rangle_{C_0(E^0)}: E^0 \to \mathbb{C}$ by
\[
\langle x_1,x_2 \rangle_{C_0(E^0)}(v)=\sum_{s(e)=v}\overline{x_1(e)}x_2(e).
\]
If $s^{-1}(v)=\emptyset$, then our convention is that the sum is equal to $0$. If $\osupp(x)$ is an
$s$-section, then the function $\widehat{x}$ of Equation~\eqref{eq:xhat} is equal to $\langle x,x
\rangle_{C_0(E^0)}$. As in \cite{Katsura:TAMS04} (see also \cite[Page~79]{Raeburn:GraphAlg}),
$\langle\cdot,\cdot \rangle_{C_0(E^0)}$ defines a $C_0(E^0)$-valued inner product on $C_c(E^1)$,
and the completion $X(E)$ of $C_c(E^1)$ in the norm $\|x\|_{C_0(E^0)}^2 = \|\langle x, x
\rangle_{C_0(E^0)}\|$ is a $C^*$-correspondence over $C_0(E^0)$, which we call the \emph{graph
correspondence} associated to $E$.

\begin{ntn}\label{def of T(E) and O(E)}
We denote by $\mathcal{T}(E)$ \cite[Definition~2.2]{Katsura:TAMS04} the Toeplitz algebra
$\mathcal{T}_{X(E)}$, and we denote by $\mathcal{O}(E)$ \cite[Definition~2.10]{Katsura:TAMS04} the
$C^*$-algebra $\mathcal{O}_{X(E)}$.
\end{ntn}

\section{Proofs of the main results}\label{sec:proofs}
To prove Theorem~\ref{thm:mainToeplitz}, we must show that $\psi(x)^* \psi(y) = \pi(\langle x,
y\rangle_{C_0(E^0)})$ for all $x,y \in C_c(E^1)$. We establish this formula for $x,y$ supported on
$s$-sections in Lemma~\ref{polar identity}, and then extend it to arbitrary $x, y \in C_c(E^1)$ in
Proposition~\ref{property of reduced Toep rep}.

Let $U$, $V$ be complex vector spaces. Then any sesquilinear form $\varphi:V \times V \to U$ which
is conjugate linear in the first variable satisfies the polarisation identity
\[
\varphi(v_1,v_2)=\frac{1}{4}\sum_{n=0}^{3}(-i)^n\varphi(v_1+i^n v_2,v_1+i^nv_2).
\]
To prove this, just expand the sum.

\begin{lemma}\label{polar identity}
Let $E$ be a topological graph and let $(\psi,\pi)$ be a Toeplitz representation of $E$. Fix $x_1$,
$x_2 \in C_c(E^1)$. Suppose that $\mathrm{supp}(x_1) \cup \mathrm{supp}(x_2)$ is contained in an
open $s$-section. Then $\pi(\langle x_1,x_2 \rangle_{C_0(E^0)})=\psi(x_1)^* \psi(x_2)$.
\end{lemma}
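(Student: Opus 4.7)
The plan is to use polarisation to reduce to the diagonal case, which is exactly condition~\eqref{it:s*s} of Definition~\ref{reduced Toep rep}.

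First I would define the sesquilinear form
\[
\varphi : C_c(E^1) \times C_c(E^1) \to B, \qquad \varphi(x_1,x_2) := \psi(x_1)^*\psi(x_2) - \pi(\langle x_1,x_2 \rangle_{C_0(E^0)}),
\]
and verify that $\varphi$ is conjugate linear in the first variable and linear in the second. This is immediate: $\psi$ is linear by hypothesis, taking adjoints is conjugate linear, $\pi$ is linear, and $\langle\cdot,\cdot\rangle_{C_0(E^0)}$ is conjugate linear in its first argument and linear in its second.

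Next I would invoke the polarisation identity recalled just before the statement of the lemma, which gives
\[
\varphi(x_1,x_2) = \tfrac{1}{4}\sum_{n=0}^{3}(-i)^n \varphi\big(x_1+i^n x_2,\ x_1+i^n x_2\big).
\]
Each $v_n := x_1 + i^n x_2$ lies in $C_c(E^1)$ with $\supp(v_n) \subset \supp(x_1) \cup \supp(x_2)$, so by hypothesis each $\supp(v_n)$ is contained in the same open $s$-section. It therefore suffices to show $\varphi(v,v)=0$ whenever $\supp(v)$ is contained in an open $s$-section.

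For this I would use the identification noted in the passage immediately preceding Notation~\ref{def of T(E) and O(E)}: when $\osupp(v)$ is an $s$-section, the function $\widehat{v}$ of \eqref{eq:xhat} coincides with $\langle v,v\rangle_{C_0(E^0)}$. Condition~\eqref{it:s*s} of Definition~\ref{reduced Toep rep} then yields
\[
\psi(v)^*\psi(v) = \pi(\widehat{v}) = \pi(\langle v,v\rangle_{C_0(E^0)}),
\]
so $\varphi(v,v)=0$ and the polarisation formula collapses to $\varphi(x_1,x_2)=0$, which is the claim.

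There is no real obstacle here beyond bookkeeping: the only place one must be careful is in confirming that $\supp(x_1+i^n x_2)$ still sits inside a single open $s$-section (so that \eqref{it:s*s} applies to each polarisation term), and this is automatic from $\supp(x_1+i^n x_2) \subset \supp(x_1)\cup\supp(x_2)$. The cross-term condition~\eqref{it:s*t} of Definition~\ref{reduced Toep rep} is not needed for this lemma --- it will only become relevant in Proposition~\ref{property of reduced Toep rep}, where supports of $x$ and $y$ may be disjoint but not simultaneously in a single $s$-section.
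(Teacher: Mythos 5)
Your proposal is correct and is essentially the paper's own argument: the paper likewise applies the polarisation identity (to $\langle\cdot,\cdot\rangle_{C_0(E^0)}$ and to $(x,y)\mapsto\psi(x)^*\psi(y)$ separately, rather than to their difference, but this is only a cosmetic repackaging) and reduces to Definition~\ref{reduced Toep rep}\eqref{it:s*s} via the observation that each $\supp(x_1+i^nx_2)\subset\supp(x_1)\cup\supp(x_2)$ lies in the given open $s$-section. Your closing remark that \eqref{it:s*t} is only needed later, in Proposition~\ref{property of reduced Toep rep}, is also accurate.
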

\begin{proof}
The polarisation identity for $\langle \cdot ,\cdot \rangle_{C_0(E^0)}$, together with
Definition~\ref{reduced Toep rep}\eqref{it:s*s}, gives
\begin{flalign*}
&&\pi(\langle x_1,x_2 \rangle_{C_0(E^0)})
    &=\frac{1}{4}\sum_{n=0}^{3}(-i)^n\pi(\langle x_1+i^nx_2,x_1+i^nx_2\rangle_{C_0(E^0)})& \\
    &&&=\frac{1}{4}\sum_{n=0}^{3}(-i)^n\psi(x_1+i^nx_2)^*\psi(x_1+i^nx_2) =\psi(x_1)^* \psi(x_2). &\qedhere
\end{flalign*}
\end{proof}

In the following and throughout the rest of the paper we write $U^c$ for the complement $X
\setminus U$ of a subset $U$ of a set $X$.

\begin{rmk}\label{property of regular space}
Let $X$ be locally compact Hausdorff space. Fix a compact subset $K \subset X$ and an open
neighbourhood $U$ of $K$. Since $U^c$ is closed and disjoint from $K$, there is a function $f \in
C(X,[0,1])$ which is identically 1 on $K$ and identically 0 on $U^c$ (see for example
\cite[Theorem~37.A]{Simmons:TopAnal}). So $V := f^{-1}\big((1/2, 3/2)\big)$ is open and satisfies
$K \subset V \subset \overline{V} \subset U$.
\end{rmk}

\begin{prop}\label{property of reduced Toep rep}
Let $E$ be a topological graph and let $(\psi,\pi)$ be a Toeplitz representation of $E$. Fix open
$s$-sections $U_1$, $U_2 \subset E^1$, and $x_1$, $x_2 \in C_c(E^1)$ with $\mathrm{supp}(x_i)
\subset U_i$, for $i=1$, $2$. Then $\pi(\langle x_1,x_2 \rangle_{C_0(E^0)})=\psi(x_1)^* \psi(x_2)$.
\end{prop}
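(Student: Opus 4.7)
The plan would be to reduce the statement to Lemma~\ref{polar identity} by splitting $x_1$ via a bump function into a piece that shares an open $s$-section with $x_2$ and a piece whose support avoids $\supp(x_2)$. Set $K := \supp(x_1) \cap \supp(x_2)$, a compact subset of the open set $U_1 \cap U_2$. A standard Urysohn-type construction in $E^1$ (for example, iterating Remark~\ref{property of regular space}) yields an open set $V$ and a function $h \in C(E^1,[0,1])$ with $K \subset V \subset \overline{V} \subset U_1 \cap U_2$, $h \equiv 1$ on $\overline{V}$, and $\supp(h) \subset U_1 \cap U_2$. I would then set $y_1 := h \cdot x_1$ and $z_1 := (1-h) \cdot x_1$; both lie in $C_c(E^1)$ and sum to $x_1$.

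For the first piece, $\supp(y_1) \subset \supp(h) \subset U_2$, so $\supp(y_1) \cup \supp(x_2)$ sits in the open $s$-section $U_2$, and Lemma~\ref{polar identity} delivers $\psi(y_1)^*\psi(x_2) = \pi(\langle y_1,x_2\rangle_{C_0(E^0)})$. A pointwise check, noting that any edge $e$ with $\overline{x_1(e)}\,x_2(e) \neq 0$ lies in $K$ and hence satisfies $h(e)=1$, shows $\langle y_1,x_2\rangle_{C_0(E^0)} = \langle x_1,x_2\rangle_{C_0(E^0)}$.

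For the second piece, since $h \equiv 1$ on $V \supset K$, the function $z_1$ vanishes on $V$, forcing $\supp(z_1) \subset \supp(x_1) \setminus V$, which is disjoint from $K$ and therefore from $\supp(x_2)$. The disjoint compact sets $\supp(z_1)$ and $\supp(x_2)$ admit disjoint open neighborhoods $O_1,O_2$ in the Hausdorff space $E^1$; intersecting with $U_1$ and $U_2$ gives disjoint open $s$-sections containing the two supports, so Definition~\ref{reduced Toep rep}\eqref{it:s*t} yields $\psi(z_1)^*\psi(x_2) = 0$. Adding the two contributions completes the proof.

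The main obstacle is the topological bookkeeping: we need the genuine (closed) supports of $y_1$ and $z_1$, not merely their open supports, to lie where Definition~\ref{reduced Toep rep}\eqref{it:s*s}--\eqref{it:s*t} apply. This is why we arrange $h \equiv 1$ on an open neighborhood of $K$ (and not only on $K$), and why we pass from the disjointness of compact supports to disjoint open $s$-sections by hand rather than relying on a partition-of-unity argument \`a la Remark~\ref{POU on subspace}.
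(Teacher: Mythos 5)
Your argument is correct, but it takes a genuinely different route from the paper's proof. The paper covers $\supp(x_1)\cup\supp(x_2)$ by finitely many carefully constructed neighbourhoods $N_e$, uses the partition of unity of Remark~\ref{POU on subspace} to write $x_i=\sum_{e}x_{i,e}$, and then argues pairwise: cross-terms $\psi(x_{1,e})^*\psi(x_{2,f})$ with $\overline{N_e}\cap\overline{N_f}=\emptyset$ vanish by Definition~\ref{reduced Toep rep}\eqref{it:s*t}, while the remaining pairs are shown, via the three properties of the $N_e$ and the dichotomy \eqref{union of supports contained in the same s-section}, to be supported in a common open $s$-section so that Lemma~\ref{polar identity} applies. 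You instead split only $x_1$, and only into two pieces, using a single bump function equal to $1$ on a neighbourhood of $K=\supp(x_1)\cap\supp(x_2)$ and supported in $U_1\cap U_2$: the piece $hx_1$ sits with $x_2$ inside the single $s$-section $U_2$, so Lemma~\ref{polar identity} applies, and the inner product is unchanged because every edge contributing to $\langle x_1,x_2\rangle_{C_0(E^0)}$ lies in $K$ where $h\equiv 1$; the piece $(1-h)x_1$ has closed support disjoint from $\supp(x_2)$, and separating these two compact sets by disjoint open sets and intersecting with $U_1$, $U_2$ puts you exactly in the setting of Definition~\ref{reduced Toep rep}\eqref{it:s*t}. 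This avoids the partition-of-unity bookkeeping and the combinatorial dichotomy entirely, at the cost of the one point you correctly flag: you must apply Remark~\ref{property of regular space} twice so that $h\equiv 1$ on an open neighbourhood of $K$ \emph{and} $\supp(h)\subset U_1\cap U_2$, which guarantees that the genuine closed supports of both pieces (not just their open supports) land where conditions \eqref{it:s*s} and \eqref{it:s*t} require them. Both proofs feed into Proposition~\ref{reduced Toep rep is Toep rep} in the same way, so nothing downstream is affected; your version is shorter and arguably more transparent, while the paper's covering argument has the mild advantage of rehearsing the machinery it reuses there.
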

\begin{proof}
Let $K_i=\supp(x_i)$, for $i=1$, $2$, and $K=K_1 \cup K_2$. Since $E^1$ is locally compact and
Hausdorff, Remark~\ref{property of regular space} implies that there exist open sets $U_i '\subset
E^1$ such that $K_i \subset U_i' \subset \overline{U_i'} \subset U_i$, for $i=1$, $2$.

For each $e \in K$ and $W \in \{U_i, U_i', \overline{U_i'}^c : i=1, 2\}$ such that $e \in W$, fix
an open neighbourhood $V(e,W)$ of $e$ such that $\overline{V(e,W)} \subset W$. Define
\begin{align*}
N_e=\Big( \bigcap_{e \in U_i}V(e,U_i) \Big)
        \bigcap \Big( \bigcap_{e \in K_i}V(e,U_i') \Big)
        \bigcap \Big( \bigcap_{e \in U_i^c} V(e, \overline{U_i'}^{c}) \Big).
\end{align*}
Then for $i=1$, $2$,
\begin{enumerate}
\item if $e \in U_i$, then $\overline{N_e} \subset U_i$;
\item if $e \in K_i$, then $\overline{N_e} \subset U_i'$; and
\item if $e \notin U_i$, then $\overline{N_e} \cap \overline{U_i'}=\emptyset$.
\end{enumerate}
Since $K$ is compact, there is a finite subset $F \subset K$, such that $\{N_e : e \in F\}$ covers
$K$.

Fix $e$, $f \in F$. Suppose that $\overline{N_e} \cap \overline{N_f} \neq \emptyset$. We claim that
\begin{equation}\label{union of supports contained in the same s-section}
\text{either } \overline{N_e} \cup \overline{N_f} \subset U_1,\text{ or }
    \overline{N_e} \cup \overline{N_f} \subset U_2.
\end{equation}
When $e=f$, this is trivial. Suppose $e \neq f$. Assume without loss of generality that $e \in
K_1$. Then $(2)$ forces $\overline{N_e} \subset U_1'$, so $\overline{N_e} \cap \overline{N_f} \neq
\emptyset$ forces $\overline{N_f} \cap \overline{U_1'} \neq \emptyset$. $(3)$ then forces $f \in
U_1$, so $(1)$ forces $\overline{N_f} \subset U_1$ and hence $\overline{N_e} \cup \overline{N_f}
\subset U_1$ as required.

Since $\{N_e : e \in F\}$ is a finite open cover of $K$, Remark~\ref{POU on subspace} implies that
there are finite collections of functions $\{x_{1,e} : e \in F \}$, $\{x_{2,e} : e \in F \} \subset
C_c(E^1)$ such that $\osupp(x_{i,e}) \subset N_e \cap K$ for all $e \in F$, $i=1$, $2$, and
$\sum_{e \in F}x_{1,e}=x_1$, $\sum_{e \in F}x_{2,e}=x_2$. Linearity of $\psi$ gives
$\psi(x_1)^*\psi(x_2)=\sum_{e,f \in F}\psi(x_{1,e})^*\psi(x_{2,f})$. Fix $e$, $f \in F$ such that
$\overline{N_e} \cap \overline{N_f} = \emptyset$. By Remark~\ref{property of regular space} there
are disjoint open $s$-sections $O_e$, $O_f \subset E^1$, such that $\overline{N_e} \subset O_e$,
and $\overline{N_f} \subset O_f$. Thus condition~\eqref{it:s*t} of Definition~\ref{reduced Toep
rep} gives $\psi(x_{1,e})^*\psi(x_{2,f})=0$ since $\supp(x_{i,e}) \subset \overline{N_e}$ for all
$e \in F$, $i=1$, $2$. It follows that $\psi(x_1)^*\psi(x_2)=\sum_{\overline{N_e} \cap
\overline{N_f} \neq \emptyset}\psi(x_{1,e})^*\psi(x_{2,f})$. Whenever $\overline{N_e} \cap
\overline{N_f} \neq \emptyset$, Equation~\eqref{union of supports contained in the same s-section}
implies that $\overline{N_e} \cup \overline{N_f}$ is contained in an open $s$-section, so
Lemma~\ref{polar identity} gives $\psi(x_1)^*\psi(x_2)=\sum_{\overline{N_e} \cap \overline{N_f}
\neq \emptyset}\pi(\langle x_{1,e},x_{2,f} \rangle_{C_0(E^0)})=\pi(\langle
x_1,x_2\rangle_{C_0(E^0)})$.
\end{proof}

\begin{prop}\label{reduced Toep rep is Toep rep}
Let $E$ be a topological graph and let $(\psi,\pi)$ be a Toeplitz representation of $E$. Then
$\psi$ is a bounded linear operator on $(C_c(E^1),\Vert\cdot\Vert_{C_0(E^0)})$. Let
$\widetilde{\psi}$ be the unique extension of $\psi$ to $X(E)$. Then the pair
$(\widetilde{\psi},\pi)$ is a Toeplitz representation of $X(E)$.
\end{prop}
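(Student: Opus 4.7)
The plan is to reduce boundedness of $\psi$ to the case of functions supported in an open $s$-section, for which the inner product formula is already available through Proposition~\ref{property of reduced Toep rep}, and then extend everything by continuity.

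First, I would establish the key estimate $\|\psi(x)\|^2 \le \|x\|_{C_0(E^0)}^2$ for every $x \in C_c(E^1)$. Fix such an $x$. Since $s$ is a local homeomorphism and $\supp(x)$ is compact, each point of $\supp(x)$ has an open neighbourhood on which $s$ is a homeomorphism, so we can cover $\supp(x)$ by finitely many open $s$-sections $N_1,\dots,N_k$. Applying Remark~\ref{POU on subspace}, I obtain $x_1,\dots,x_k \in C_c(E^1)$ with $\osupp(x_i) \subset N_i \cap \supp(x)$ and $\sum_i x_i = x$. By shrinking the $N_i$ at the outset if necessary (using Remark~\ref{property of regular space} and the fact that $s$-sections restrict to $s$-sections on subsets), I can arrange that each $\supp(x_i)$ is contained in an open $s$-section. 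Linearity of $\psi$ together with Proposition~\ref{property of reduced Toep rep} then gives
\[
\psi(x)^*\psi(x) = \sum_{i,j} \psi(x_i)^*\psi(x_j) = \sum_{i,j} \pi(\langle x_i, x_j\rangle_{C_0(E^0)}) = \pi\bigl(\langle x, x\rangle_{C_0(E^0)}\bigr),
\]
where the last step uses bilinearity of the inner product. Since $\pi$ is a $*$-homomorphism, it is norm-decreasing, so $\|\psi(x)\|^2 = \|\pi(\langle x,x\rangle_{C_0(E^0)})\| \le \|\langle x,x\rangle_{C_0(E^0)}\| = \|x\|_{C_0(E^0)}^2$.

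Having established that $\psi : C_c(E^1) \to B$ is bounded and linear with respect to the $X(E)$-norm, and since $C_c(E^1)$ is dense in $X(E)$ by construction, there is a unique bounded linear extension $\widetilde{\psi} : X(E) \to B$.

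Finally, to verify that $(\widetilde{\psi},\pi)$ is a Toeplitz representation of $X(E)$ in the sense of Definition~\ref{def of Toep rep of correspondence}, I check the two defining conditions on the dense subspace $C_c(E^1)$ and then extend by continuity, using that $\pi$ is norm-decreasing and that multiplication is jointly continuous. The left-module identity $\widetilde{\psi}(f \cdot x) = \pi(f)\widetilde{\psi}(x)$ for $f \in C_0(E^0)$ and $x \in C_c(E^1)$ is exactly condition~\eqref{it:actions} of Definition~\ref{reduced Toep rep}. The inner-product identity $\widetilde{\psi}(x)^*\widetilde{\psi}(y) = \pi(\langle x,y\rangle_{C_0(E^0)})$ for $x, y \in C_c(E^1)$ is obtained by the same polarisation/partition-of-unity computation used in Step~1, applied to the pair $(x,y)$ rather than $(x,x)$.

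The main technical obstacle is the first step: justifying that $x$ can be decomposed as $\sum_i x_i$ with each $\supp(x_i)$ contained in an open $s$-section, so that Proposition~\ref{property of reduced Toep rep} applies directly. This requires combining the local-homeomorphism property of $s$, the compactness of $\supp(x)$, the shrinking argument of Remark~\ref{property of regular space}, and the partition of unity from Remark~\ref{POU on subspace}; once this is in place, the remainder of the proof is a straightforward extension-by-continuity argument.
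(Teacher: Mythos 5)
Your proposal is correct and follows essentially the same route as the paper: cover the compact support by open $s$-sections, use the partition of unity of Remark~\ref{POU on subspace} together with Proposition~\ref{property of reduced Toep rep} to get $\psi(x)^*\psi(y)=\pi(\langle x,y\rangle_{C_0(E^0)})$ on $C_c(E^1)$, deduce boundedness from the fact that $\pi$ is norm-decreasing, and extend by continuity. The only (immaterial) difference is that the paper runs the decomposition argument for a general pair $x_1,x_2$ in one pass rather than first for $x=y$.
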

\begin{proof}
Fix $x_1$, $x_2 \in C_c(E^1)$. Let $K=\supp(x_1) \cup \supp(x_2)$. For each $e \in K$, there exists
an open $s$-section $N_e$ containing $e$. Remark~\ref{property of regular space} yields an open
neighbourhood $N_e '$ of $e$ such that $\overline{N_e '}\subset N_e$. Since $K$ is compact, there
is a finite subset $F \subset K$, such that $\{N_e' : e \in F\}$ covers $K$. By Remark~\ref{POU on
subspace}, there exist $\{x_{1,e} : e \in F \}$, $\{x_{2,e} : e \in F \} \subset C_c(E^1)$ such
that $\osupp(x_{i,e}) \subset N_e' \cap K$, for all $e \in F$, $i=1$, $2$, and $\sum_{e \in
F}x_{1,e}=x_1$, $\sum_{e \in F}x_{2,e}=x_2$. Proposition~\ref{property of reduced Toep rep} implies
that
\[
\pi(\langle x_1,x_2 \rangle_{C_0(E^0)})
    =\sum_{e,f \in F}\pi(\langle x_{1,e},x_{2,f} \rangle_{C_0(E^0)})
    =\sum_{e,f \in F}\psi(x_{1,e})^*\psi(x_{2,f})=\psi(x_1)^*\psi(x_2).
\]
Then $\Vert \psi(x) \Vert^2=\Vert \psi(x)^*\psi(x) \Vert=\Vert \pi(\langle x,x \rangle_{C_0(E^0)})
\Vert \leq \Vert \langle x,x \rangle_{C_0(E^0)} \Vert=\Vert x \Vert_{C_0(E^0)}^2$, for all $x \in C_c(E^1)$, so $\psi$ is bounded, and hence extends uniquely to a bounded linear map $\widetilde{\psi}$ on
$X(E)$. By continuity, $(\widetilde{\psi},\pi)$ is a Toeplitz representation of $X(E)$.
\end{proof}

\begin{rmk}\label{rmk:Trep restricts}
Given a Toeplitz representation $(\psi,\pi)$ of $X(E)$, the pair $(\psi\vert_{C_c(E^1)}, \pi)$ is a
Toeplitz representation of $E$. So Proposition~\ref{reduced Toep rep is Toep rep} implies that
$(\psi, \pi) \mapsto (\psi|_{C_c(E^1)}, \pi)$ is a bijection between Toeplitz representations of
$X(E)$ and Toeplitz representations of $E$, with inverse described by Proposition~\ref{reduced Toep
rep is Toep rep}.
\end{rmk}

\begin{proof}[Proof of Theorem~\ref{thm:mainToeplitz}]
Let $(i_X,i_A)$ be the universal Toeplitz representation of $X(E)$ in $\mathcal{T}(E)$. Then
$(i_1,i_0):=(i_X\vert_{C_c(E^1)},i_A)$ is a Toeplitz representation of $E$. Fix another Toeplitz
representation $(\psi,\pi)$ of $E$ in a $C^*$-algebra $B$. By Proposition~\ref{reduced Toep rep is
Toep rep}, $\psi$ extends to $\widetilde{\psi} : X(E) \to B$ such that $(\widetilde{\psi},\pi)$ is
a Toeplitz representation of $X(E)$. By the universal property of $(i_X,i_A)$, there exists a
homomorphism $h_{\widetilde{\psi}, \pi} : \mathcal{T}(E) \to B$, such that $h_{\widetilde{\psi},
\pi} \circ i_X=\widetilde{\psi}$, and $h_{\widetilde{\psi}, \pi} \circ i_A=\pi$. In particular
$h_{\widetilde{\psi},\pi} \circ i_1=\psi$. Hence $(i_1,i_0)$ is a universal Toeplitz representation
of $E$ which generates $\mathcal{T}(E)$. The second statement follows easily.
\end{proof}

Our next task is to prove Theorem~\ref{thm:mainCK}. We first need some background results.

\begin{rmk}\label{alternative definition of E_{fin}^0}
Let $E$ be a topological graph. Fix $v \in E_{\mathrm{fin}}^0$. There exists a neighbourhood $U$ of
$v$ such that $r^{-1}(U)$ is compact, and there exists an open neighbourhood $V$ of $v$ such that
$V \subset U$. By Remark~\ref{property of regular space}, there exists an open neighbourhood $W$ of
$v$ such that $\overline{W} \subset V$. Since $r^{-1}(\overline{W})$ is closed and is contained in
$r^{-1}(U)$, it is compact. Hence $v \in E_{\mathrm{fin}}^0$ if and only if there exists an open
neighbourhood $N$ of $v$ such that $r^{-1}(\overline{N})$ is compact.
\end{rmk}

Let $E$ be a topological graph. Recall from Definition~\ref{dfn:O_X} that Katsura's covariance
condition for Toeplitz representations of $X(E)$ involves the ideal $\phi^{-1}(\mathcal{K}(X(E)))
\cap (\ker \phi)^{\perp}$. Katsura computed this ideal in \cite{Katsura:TAMS04}. We quote his
result and give a simple proof.

Observe that $\ker\phi=\big\{f \in C_0(E^0) : f\big(\overline{r(E^1)}\big)\equiv 0 \big\}$. Hence
$(\ker\phi)^{\perp}=\big\{f \in C_0(E^0) : f\big( \overline{E_{\mathrm{sce}}^0} \big) \equiv 0
\big\}$.

\begin{prop}[{\cite[Proposition~1.24]{Katsura:TAMS04}}]\label{compute of phi^{-1}(K(X(E)))}
Let $E$ be a topological graph. Then
\[
    \phi^{-1}(\mathcal{K}(X(E))) = C_0(E_{\mathrm{fin}}^0).
\]
Moreover $\phi^{-1}(\mathcal{K}(X(E))) \cap (\ker \phi)^{\perp}=C_0(E_{\mathrm{rg}}^0)$.
\end{prop}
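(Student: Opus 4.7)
The plan is to establish the two equalities separately, with the second following quickly from the first.

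For the ``moreover'' clause, note that $\ker\phi$ consists exactly of those $f \in C_0(E^0)$ that vanish on $\overline{r(E^1)}$, so $\ker\phi = C_0(E^0_{\mathrm{sce}})$. The annihilator ideal is then $(\ker\phi)^\perp = C_0(E^0 \setminus \overline{E^0_{\mathrm{sce}}})$. Granting the first equality, its intersection with $C_0(E^0_{\mathrm{fin}})$ is $C_0(E^0_{\mathrm{fin}} \setminus \overline{E^0_{\mathrm{sce}}}) = C_0(E^0_{\mathrm{rg}})$, as required.

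For the inclusion $C_0(E^0_{\mathrm{fin}}) \subseteq \phi^{-1}(\Kk(X(E)))$, since $\phi$ is continuous and linear, and since every $f \in C_c(E^0_{\mathrm{fin}})$ can be written as a complex linear combination of four nonnegative functions in $C_c(E^0_{\mathrm{fin}})$, it suffices to treat the case $f \geq 0$. By Remark~\ref{properties of special subsets of E^0} the set $r^{-1}(\supp(f))$ is compact in $E^1$, and since $s$ is a local homeomorphism we may cover it by finitely many precompact open $s$-sections $U_1,\dots,U_n$. Remark~\ref{POU on subspace} then yields nonnegative $h_1,\dots,h_n \in C_c(E^1)$ with $\osupp(h_i) \subset U_i$ and $\sum_i h_i = f \circ r$. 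Put $g_i := \sqrt{h_i} \in C_c(U_i)$. Since $s|_{U_i}$ is injective, a direct computation of the inner product in $X(E)$ gives
\[
\Theta_{g_i,g_i}(z)(e) = g_i(e)\,\langle g_i, z\rangle_{C_0(E^0)}(s(e)) = g_i(e)^2\,z(e) = h_i(e)\,z(e)
\]
for every $z \in C_c(E^1)$ and $e \in E^1$. Summing and using $\sum_i h_i = f \circ r$ shows that $\sum_i \Theta_{g_i,g_i}$ and $\phi(f)$ agree on the dense subspace $C_c(E^1)$, hence coincide; so $\phi(f) \in \Kk(X(E))$.

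For the reverse inclusion I would argue by contrapositive: if $f(v) \neq 0$ for some $v \notin E^0_{\mathrm{fin}}$, then $\phi(f)$ fails to be compact. Replacing $f$ by $f^*f$ (noting $\Kk(X(E))$ is a two-sided ideal in $\Ll(X(E))$), assume $f \geq 0$, and pick an open $V \ni v$ with $f \geq c$ on $V$ for some $c>0$. By definition of $E^0_{\mathrm{fin}}$, the set $r^{-1}(V)$ is not compact in $E^1$. Using local compactness of $E^1$, Hausdorffness, and the local-homeomorphism property of $s$, I would extract inductively a sequence of pairwise disjoint precompact open $s$-sections $W_n \subset r^{-1}(V)$ (at each stage the remaining part of $r^{-1}(V)$ is still non-compact, hence non-empty after removing finitely many precompact closures). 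Choose bump functions $x_n \in C_c(W_n)$ with $\|x_n\|_{C_0(E^0)}=1$. Disjointness of supports and the $s$-section property give $\langle x_m, x_n\rangle_{C_0(E^0)} = 0$ for $m \neq n$, while $\|\phi(f)x_n\|^2 = \|\langle f \cdot x_n, f \cdot x_n\rangle_{C_0(E^0)}\| \geq c^2$. Such a pairwise orthonormal sequence on which $\phi(f)$ acts with uniformly bounded-below norm cannot be present for a compact operator, contradicting $\phi(f) \in \Kk(X(E))$. The main obstacle here is precisely this inductive extraction of disjoint $s$-sections from the non-compact set $r^{-1}(V)$; everything else is elementary Hilbert-module manipulation once the sequence is in hand.
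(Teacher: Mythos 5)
Your treatment of the ``moreover'' clause and of the inclusion $C_0(E^0_{\mathrm{fin}}) \subseteq \phi^{-1}(\Kk(X(E)))$ matches the paper's: reduce to nonnegative $f \in C_c(E^0_{\mathrm{fin}})$, cover the compact set $r^{-1}(\supp(f))$ by finitely many open $s$-sections, take a subordinate partition $\sum h_i = f\circ r$, and check that $\Theta_{\sqrt{h_i},\sqrt{h_i}}$ acts as pointwise multiplication by $h_i$. That half is fine.

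The reverse inclusion, however, has a genuine gap at its final step. The principle you invoke --- that a ``compact'' operator cannot have norm bounded below on a pairwise orthogonal sequence of normalized vectors --- is false for Hilbert $C^*$-modules: elements of $\Kk(X)$ are generalized compact operators, not compact Banach-space operators. For instance, take $E^0=E^1=[0,1]$ with $r=s=\mathrm{id}$; then $X(E)=C[0,1]$ as a module over itself, $\Theta_{1,1}=\phi(1)$ is the identity and lies in $\Kk(X(E))$, yet it acts isometrically on any sequence of norm-one bump functions with pairwise disjoint supports --- exactly the kind of sequence your construction produces. (This does not contradict the proposition, since there every $v$ lies in $E^0_{\mathrm{fin}}$; it shows only that disjointness of the $W_n$ is not the relevant obstruction.) What actually forces $\phi(f)\notin\Kk(X(E))$ is that the test vectors can be chosen to \emph{escape every compact subset of} $E^1$, so that any finite sum $\sum_i\Theta_{x_i,y_i}$ with $x_i,y_i\in C_c(E^1)$ --- whose range consists of functions supported in the compact set $K=\bigcup_i\supp(x_i)$ --- misses them entirely, giving $\|\phi(f)-\sum_i\Theta_{x_i,y_i}\|\ge c$ uniformly over the dense subspace $\lsp\{\Theta_{x,y}:x,y\in C_c(E^1)\}$ of $\Kk(X(E))$. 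This is how the paper argues. Your inductive extraction does not secure this: removing finitely many precompact closures from $r^{-1}(V)$ leaves a noncompact remainder, but nothing stops all the chosen $W_n$ from accumulating inside a single compact region. A related subtlety: to find a point of $r^{-1}(\,\cdot\,)$ outside a given compact set $K$ you need a noncompact \emph{closed} set, which is why the paper passes to $r^{-1}(\overline{N_0})$ via Remark~\ref{alternative definition of E_{fin}^0} rather than working with the (non-closed) set $r^{-1}(V)$; noncompactness of $r^{-1}(V)$ alone does not rule out $r^{-1}(V)\subseteq K$.
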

\begin{proof}
The final statement follows from the previous one and definition of $E_{\mathrm{rg}}^0$. So we just
need to show that $\phi^{-1}(\mathcal{K}(X(E))) = C_0(E_{\mathrm{fin}}^0)$.

Fix $f \in C_0(E^0) \setminus C_0(E_{\mathrm{fin}}^0)$. We must show that $\phi(f) \not\in
\Kk(X(E))$. Fix $v_0 \in (E_{\mathrm{fin}}^0)^ c$, such that $f(v_0) \neq 0$. Let $m=\vert f(v_0)
\vert$ and let $N_0=\{v \in E^0:\ \vert f(v) \vert > m/2 \}$, so $N_0$ is an open neighbourhood of
$v_0$. By Remark~\ref{alternative definition of E_{fin}^0}, $r^{-1}(\overline{N_0})$ is not
compact. Fix $x_1, \dots, x_n$ and $y_1, \dots, y_n$ in $C_c(E^1)$. Let $K=\bigcup^n_{i=1}
\supp(x_i)\cup\supp(y_i)$. Then $K$ is compact, so $r^{-1}(\overline{N_0})$ is not contained in
$K$. So there exists $e_0 \in r^{-1}(\overline{N_0}) \setminus K$. By Remark~\ref{property of
regular space} there exists $x_0 \in C_c(E^1)$ such that $x_0(e_0)=1$. Hence
\begin{align*}
\Big\| \phi(f)-\sum^n_{i=1}\Theta_{x_i,y_i} \Big\|
    &\geq \Big\| \phi(f)x_0-\sum^n_{i = 1}\Theta_{x_i,y_i}(x_0) \Big\|_{C_0(E^0)} \\
    &\geq \Big\vert \Big\langle \phi(f)x_0-\sum^n_{i=1}\Theta_{x_i,y_i}(x_0), \phi(f)x_0-\sum^n_{i=1}\Theta_{x_i,y_i}(x_0) \Big\rangle_{C_0(E^0)} (s(e_0)) \Big\vert^{1/2} \\
    &\geq m/2.
\end{align*}
Thus $\|\phi(f) - a\| \geq m/2$ for all $a \in \lsp\{\Theta_{x,y} : x,y \in C_c(E^1)\}$. Since
$\clsp\{\Theta_{x,y} : x,y \in C_c(E^1)\} = \Kk(X(E))$, it follows that $\phi(f) \not\in
\Kk(X(E))$.

Now fix a nonnegative function $f \in C_c(E_{\mathrm{fin}}^0)$. We must show that $\phi(f) \in
\Kk(X(E))$. Let $K'=\supp(f)$. For any $e \in r^{-1}(K')$, there exists an open $s$-section $N_e$
containing $e$. Remark~\ref{properties of special subsets of E^0} shows that $r^{-1}(K')$ is
compact. Hence there exists a finite subset $F \subset r^{-1}(K')$ such that $\{N_e\}_{e \in F}$
covers $r^{-1}(K')$. Since $\supp(f \circ r) \subset r^{-1}(K')$, Remark~\ref{POU on subspace}
yields a finite collection of functions $\{f_e : e \in F\} \subset C_c(E^1)$ such that each
$\osupp(f_e) \subset N_e \cap r^{-1}(K')$ and $\sum_{e \in F} f_e=f \circ r$. Since the $f_e$ are
supported on the $s$-sections $N_e$, we have $\theta_{\sqrt{f_e}, \sqrt{f_e}}(x)(e') = f_e(e')
x(e')$ for all $e' \in E^1$. Hence
\begin{equation}\label{CK-relation of CP-algebra}
    \phi(f)=\sum_{e \in F}\Theta_{\sqrt{f_e},\sqrt{f_e}} \in \Kk(X(E)).\qedhere
\end{equation}
\end{proof}

\begin{prop}\label{reduced covariant Toep rep is covariant Toep rep}
Let $E$ be a topological graph and let $(\psi,\pi)$ be a covariant Toeplitz representation of $E$.
Then the Toeplitz representation $(\widetilde{\psi},\pi)$ of $X(E)$ from Proposition~\ref{reduced
Toep rep is Toep rep} is also covariant. Moreover, for any nonnegative function $f \in
C_c(E_{\mathrm{rg}}^0)$, any finite cover $\mathcal{N}$ of $r^{-1}(\supp(f))$ by open $s$-sections
and any collection of functions $\{f_N : N \in \mathcal{N}\} \subset C_c(E^1)$, such that
$\osupp(f_N) \subset N \cap r^{-1}(\supp(f))$, and $\sum_{N \in \mathcal{N}}f_N=f \circ r$, we have
$\pi(f)=\sum_{N \in \mathcal{N}_f}\psi(\sqrt{f_N})\psi(\sqrt{f_N})^*$.
\end{prop}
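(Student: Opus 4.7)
The plan is to leverage Equation~(3.5) from the proof of Proposition~\ref{compute of phi^{-1}(K(X(E)))}, use it to identify $\pi$ with $\widetilde{\psi}^{(1)} \circ \phi$ on the generating family $\Gg$, and then promote this agreement to all of $C_0(E^0_{\mathrm{rg}})$ using that both maps are $*$-homomorphisms. The moreover statement then follows as a direct consequence.

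First I would observe that the proof of Proposition~\ref{compute of phi^{-1}(K(X(E)))} actually establishes more than its statement: for any nonnegative $f \in C_c(E^0_{\mathrm{fin}})$, any finite cover $\mathcal{N}$ of $r^{-1}(\supp(f))$ by open $s$-sections, and any choice of nonnegative functions $\{f_N : N \in \mathcal{N}\} \subset C_c(E^1)$ with $\osupp(f_N) \subset N \cap r^{-1}(\supp(f))$ and $\sum_{N} f_N = f \circ r$, Equation~\eqref{CK-relation of CP-algebra} gives the identity
\[
\phi(f) = \sum_{N \in \mathcal{N}} \Theta_{\sqrt{f_N}, \sqrt{f_N}} \qquad \text{in } \Kk(X(E)).
\]
Because $(\widetilde{\psi}, \pi)$ is a Toeplitz representation of $X(E)$ (by Proposition~\ref{reduced Toep rep is Toep rep}), the associated $*$-homomorphism $\widetilde{\psi}^{(1)} : \Kk(X(E)) \to B$ exists, and applying it to both sides yields
\[
\widetilde{\psi}^{(1)}(\phi(f)) = \sum_{N \in \mathcal{N}} \widetilde{\psi}(\sqrt{f_N}) \widetilde{\psi}(\sqrt{f_N})^* = \sum_{N \in \mathcal{N}} \psi(\sqrt{f_N}) \psi(\sqrt{f_N})^*.
\]

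Next I would deduce covariance of $(\widetilde{\psi}, \pi)$. By Proposition~\ref{compute of phi^{-1}(K(X(E)))}, covariance in Katsura's sense amounts to the equality $\widetilde{\psi}^{(1)} \circ \phi = \pi$ on $C_0(E^0_{\mathrm{rg}})$. Both sides are $*$-homomorphisms from $C_0(E^0_{\mathrm{rg}})$ into $B$ (the left-hand side because $\phi$ maps $C_0(E^0_{\mathrm{fin}}) \supset C_0(E^0_{\mathrm{rg}})$ into $\Kk(X(E))$ and both $\phi|_{C_0(E^0_{\mathrm{fin}})}$ and $\widetilde{\psi}^{(1)}$ are $*$-homomorphisms). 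Definition~\ref{dfn:new covariance} supplies a generating set $\Gg \subset C_c(E^0_{\mathrm{rg}})$ of nonnegative functions, and for each $f \in \Gg$ a specific cover $\mathcal{N}_f$ and partition $\{f_N\}$ satisfying $\pi(f) = \sum_N \psi(\sqrt{f_N}) \psi(\sqrt{f_N})^*$. Combined with the displayed formula above, this gives $\pi(f) = \widetilde{\psi}^{(1)}(\phi(f))$ for every $f \in \Gg$. Since $\Gg$ generates $C_0(E^0_{\mathrm{rg}})$ as a $C^*$-algebra and the two maps are $*$-homomorphisms, they coincide on all of $C_0(E^0_{\mathrm{rg}})$, establishing covariance of $(\widetilde{\psi}, \pi)$.

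Finally, the moreover statement is immediate from what we have already proved: for any nonnegative $f \in C_c(E^0_{\mathrm{rg}})$ and any admissible pair $(\mathcal{N}, \{f_N\})$, the first displayed equation gives $\sum_N \psi(\sqrt{f_N})\psi(\sqrt{f_N})^* = \widetilde{\psi}^{(1)}(\phi(f))$, and covariance of $(\widetilde{\psi}, \pi)$ identifies this with $\pi(f)$. The main conceptual point (and the only nontrivial one) is extracting the choice-independent identity for $\phi(f)$ already hidden inside the proof of Proposition~\ref{compute of phi^{-1}(K(X(E)))}; once this is recognised, the rest is a formal application of the universal machinery and the fact that $*$-homomorphisms are determined by their values on a generating set.
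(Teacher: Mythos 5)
Your proposal is correct and follows essentially the same route as the paper: both rest on the choice-independent identity $\phi(f)=\sum_{N}\Theta_{\sqrt{f_N},\sqrt{f_N}}$ extracted from the proof of Proposition~\ref{compute of phi^{-1}(K(X(E)))}, apply $\widetilde{\psi}^{(1)}$, and match against Definition~\ref{dfn:new covariance} on the generating set $\mathcal{G}$. Your write-up is if anything slightly more explicit than the paper's in justifying why agreement of the two $*$-homomorphisms on $\mathcal{G}$ suffices.
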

\begin{proof}
Let $\mathcal{G}$, $\{\mathcal{N}_f : f \in \mathcal{G}\}$, and $\{f_N : N \in \mathcal{N}_f\}$ be
as in Definition~\ref{dfn:new covariance}. By Corollary~\ref{compute of phi^{-1}(K(X(E)))}, to
prove $(\widetilde{\psi},\pi)$ is a covariant Toeplitz representation of $X(E)$, must show that
$\widetilde{\psi}^{(1)}\circ \phi(f)=\pi(f)$, for all $f \in \mathcal{G}$. Fix $f \in \mathcal{G}$,
since $\phi(f)=\sum_{N \in \mathcal{N}_f}\Theta_{\sqrt{f_N},\sqrt{f_N}}$, we have
$\widetilde{\psi}^{(1)}\circ \phi(f)=\sum_{N \in \mathcal{N}_f}\psi(\sqrt{f_N})\psi(\sqrt{f_N})^*$.
Hence $\widetilde{\psi}^{(1)}\circ \phi(f)=\pi(f)$ by Definition~\ref{dfn:new covariance}.
Therefore $(\widetilde{\psi},\pi)$ is a covariant Toeplitz representation of $X(E)$.

For the second statement observe that since $(\widetilde{\psi},\pi)$ is a covariant Toeplitz
representation of $X(E)$,
\[
\pi(f)
    =\widetilde{\psi}^{(1)}\circ \phi(f)
    =\sum_{N \in \mathcal{N}_f}\widetilde{\psi}^{(1)}(\Theta_{\sqrt{f_N},\sqrt{f_N}})
    =\sum_{N \in \mathcal{N}_f}\psi(\sqrt{f_N})\psi(\sqrt{f_N})^*.\qedhere
\]
\end{proof}

\begin{prop}\label{the restriction of covariant Toep rep is reduced covariant Toep rep}
Let $E$ be a topological graph and let $(\psi,\pi)$ be a covariant Toeplitz representation of $X(E)$. Then $(\psi\vert_{C_c(E^1)},\pi)$ is a covariant Toeplitz representation of $E$.
\end{prop}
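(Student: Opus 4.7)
The plan is as follows. By Remark~\ref{rmk:Trep restricts}, the pair $(\psi\vert_{C_c(E^1)},\pi)$ is automatically a Toeplitz representation of $E$, so the only thing left is to verify the covariance clause of Definition~\ref{dfn:new covariance}. The natural candidate is to take $\mathcal{G}$ to be the collection of all nonnegative functions in $C_c(E^0_{\mathrm{rg}})$, which trivially generates $C_0(E^0_{\mathrm{rg}})$ as a $C^*$-algebra (since $C_c(E^0_{\mathrm{rg}})$ is dense in $C_0(E^0_{\mathrm{rg}})$ and every continuous function is a complex combination of nonnegative ones).

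Next I would construct, for each $f \in \mathcal{G}$, a finite cover $\mathcal{N}_f$ and functions $\{f_N\}$ satisfying conditions~\eqref{osupp(f_N) is contained in N intersects r^{-1}(supp(f))} and~\eqref{sum of f_N=f circ r} of Definition~\ref{dfn:new covariance}. By Remark~\ref{properties of special subsets of E^0}, $r^{-1}(\supp(f))$ is compact in $E^1$. Since $s$ is a local homeomorphism, every point of $E^1$ has an open $s$-section neighbourhood, so compactness yields a finite cover $\mathcal{N}_f$ of $r^{-1}(\supp(f))$ by open $s$-sections. Applying Remark~\ref{POU on subspace} to the function $f \circ r \in C_c(E^1)$ (which is nonnegative and whose support is contained in the compact set $r^{-1}(\supp(f))$) against the cover $\mathcal{N}_f$ produces nonnegative functions $\{f_N : N \in \mathcal{N}_f\} \subset C_c(E^1)$ with $\osupp(f_N) \subset N \cap r^{-1}(\supp(f))$ and $\sum_{N \in \mathcal{N}_f} f_N = f \circ r$. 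Nonnegativity of each $f_N$ ensures that $\sqrt{f_N} \in C_c(E^1)$.

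For the third covariance condition I would invoke the covariance of $(\psi,\pi)$ as a representation of $X(E)$. By Proposition~\ref{compute of phi^{-1}(K(X(E)))}, the ideal $\phi^{-1}(\Kk(X(E))) \cap (\ker\phi)^{\perp}$ equals $C_0(E^0_{\mathrm{rg}})$, so $f$ lies in this ideal and the covariance of $(\psi,\pi)$ forces $\psi^{(1)}(\phi(f)) = \pi(f)$. The key observation, which is exactly Equation~\eqref{CK-relation of CP-algebra} in the proof of Proposition~\ref{compute of phi^{-1}(K(X(E)))}, is that because each $f_N$ is supported in an $s$-section one has
\[
\phi(f) = \sum_{N \in \mathcal{N}_f} \Theta_{\sqrt{f_N},\sqrt{f_N}}.
\]
Applying $\psi^{(1)}$ and using $\psi^{(1)}(\Theta_{x,y}) = \psi(x)\psi(y)^*$ then gives $\pi(f) = \sum_{N \in \mathcal{N}_f} \psi(\sqrt{f_N})\psi(\sqrt{f_N})^*$, completing the verification.

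The only delicate point is bookkeeping: confirming that the formula $\phi(f) = \sum_N \Theta_{\sqrt{f_N},\sqrt{f_N}}$ genuinely applies to the particular $f_N$ chosen via Remark~\ref{POU on subspace} (rather than to some other partition used inside the proof of Proposition~\ref{compute of phi^{-1}(K(X(E)))}). This is immediate from the definition of $\Theta_{x,y}$ and the fact that the support condition on the $f_N$ makes the cross-terms vanish, so no genuine obstacle arises; the bulk of the argument is simply threading together Proposition~\ref{compute of phi^{-1}(K(X(E)))}, Remark~\ref{POU on subspace}, and the covariance hypothesis.
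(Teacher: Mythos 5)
Your argument is correct and follows essentially the same route as the paper: both take $\mathcal{G}$ to be all nonnegative functions in $C_c(E^0_{\mathrm{rg}})$, obtain the partition $\{f_N\}$ subordinate to a finite cover of $r^{-1}(\supp(f))$ by open $s$-sections exactly as in Equation~\eqref{CK-relation of CP-algebra}, and then apply $\psi^{(1)}$ to $\phi(f)=\sum_{N}\Theta_{\sqrt{f_N},\sqrt{f_N}}$ using covariance of $(\psi,\pi)$ on $X(E)$ via Proposition~\ref{compute of phi^{-1}(K(X(E)))}. The only cosmetic difference is that you re-derive the decomposition of $\phi(f)$ where the paper simply cites it.
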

\begin{proof}
Remark~\ref{rmk:Trep restricts} implies that $(\psi\vert_{C_c(E^1)},\pi)$ is a Toeplitz
representation of $E$. Let $\mathcal{G}$ be the set of all nonnegative functions in
$C_c(E_{\mathrm{rg}}^0)$. Fix $f \in \mathcal{G}$. By Equation~\eqref{CK-relation of CP-algebra},
there exists a finite cover $\mathcal{N}$ of $r^{-1}(\supp(f))$ by open $s$-sections and a finite
collection of functions $\{f_N : N \in \mathcal{N}\} \subset C_c(E^1)$, such that $\osupp(f_N)
\subset N \cap r^{-1}(\supp(f))$, $\sum_{N \in \mathcal{N}}f_N=f \circ r$, and $\phi(f)=\sum_{N \in
\mathcal{N}}\Theta_{\sqrt{f_N},\sqrt{f_N}}$. Since $(\psi,\pi)$ is a covariant Toeplitz
representation of $X(E)$, we have
\[
\pi(f)=\psi^{(1)} \circ \phi(f)=\sum_{N \in \mathcal{N}}\psi(\sqrt{f_N})\psi(\sqrt{f_N})^*=\sum_{N \in \mathcal{N}}\psi\vert_{C_c(E^1)}(\sqrt{f_N})\psi\vert_{C_c(E^1)}(\sqrt{f_N})^* .
\]
Hence $(\psi\vert_{C_c(E^1)},\pi)$ is covariant.
\end{proof}

\begin{proof}[Proof of Theorem~\ref{thm:mainCK}]
Propositions~\ref{reduced covariant Toep rep is covariant Toep rep} and \ref{the restriction of
covariant Toep rep is reduced covariant Toep rep} provide a bijective map from covariant Toeplitz
representations of $E$ to covariant Toeplitz representations of $X(E)$. The result now follows from
the same argument as Theorem~\ref{thm:mainToeplitz}.
\end{proof}

We now just need to prove Corollary~\ref{covariant condition of top graph with local homeomorphic
range map}. We must first show that under the hypotheses of the corollary, there are plenty of
local $r$-fibrations (see Definition~\ref{def:r-fibration}).

\begin{lemma}\label{existence of precompact open local r-fibration}
Let $E$ be a topological graph. Suppose that $r$ is a local homeomorphism and $r(E^1)$ is closed.
Then for any $v \in E_{\mathrm{rg}}^0$, there exists a precompact open local $r$-fibration
$(\mathcal{U},V)$, such that $v \in V \subset \overline{V} \subset E_{\mathrm{rg}}^0$.
\end{lemma}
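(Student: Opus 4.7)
The plan is to construct $(\mathcal{U}, V)$ by first locating the (necessarily finitely many) preimages of $v$, choosing precompact pairwise disjoint bisections around them, and then shrinking a neighborhood of $v$ until it is cleanly covered by these bisections in the required sense.

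Since $v \in E_{\mathrm{rg}}^0 \subset E_{\mathrm{fin}}^0$, Remark~\ref{alternative definition of E_{fin}^0} supplies an open neighborhood $N_0$ of $v$ with $K := r^{-1}(\overline{N_0})$ compact. Because $r$ is a local homeomorphism, the fiber $r^{-1}(v)$ is discrete in $E^1$; as a closed subset of the compact set $K$, it is therefore finite. The hypothesis that $r(E^1)$ is closed together with Remark~\ref{properties of special subsets of E^0} shows that $v$ lies in the interior of $r(E^1)$, so the fiber is nonempty; write $r^{-1}(v) = \{e_1, \dots, e_n\}$. For each $i$, intersecting a neighborhood on which $r$ is a homeomorphism with one on which $s$ is a homeomorphism produces an open bisection around $e_i$, and using the Hausdorff property together with local compactness I can shrink these to pairwise disjoint precompact open bisections $W_1, \dots, W_n \subset r^{-1}(N_0)$. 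Since $r$ is open on each $W_i$, the set $V_2 := \bigcap_i r(W_i)$ is an open neighborhood of $v$.

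The crucial step is to find an open $V \ni v$ satisfying $r^{-1}(V) \subset \bigcup_i W_i$. The set $C := K \setminus \bigcup_i W_i$ is closed in the compact set $K$, hence compact, and by construction $v \notin r(C)$. Therefore $r(C)$ is compact, hence closed in $E^0$, and avoids $v$, so $V_1 := N_0 \setminus r(C)$ is an open neighborhood of $v$. Any $e \in r^{-1}(V_1)$ lies in $r^{-1}(N_0) \subset K$ but has $r(e) \notin r(C)$, so $e \notin C$ and hence $e \in \bigcup_i W_i$.

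To finish, I apply Remark~\ref{property of regular space} to the compact set $\{v\}$ inside the open set $V_1 \cap V_2 \cap E_{\mathrm{rg}}^0 \cap N_0$ (noting that $E_{\mathrm{rg}}^0$ is open by Remark~\ref{properties of special subsets of E^0}) to obtain an open $V \ni v$ with $\overline{V}$ compact and contained in this intersection. Define $\mathcal{U} := \{W_i \cap r^{-1}(V) : 1 \le i \le n\}$. Each element is an open bisection sitting inside the precompact set $W_i$, and the $W_i$ are pairwise disjoint; the containment $V \subset r(W_i)$ forces $r(W_i \cap r^{-1}(V)) = V$ for every $i$; and $r^{-1}(V) \subset r^{-1}(V_1) \subset \bigcup_i W_i$ gives $\bigcup_i (W_i \cap r^{-1}(V)) = r^{-1}(V)$. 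The main obstacle is the ``no stray edges'' argument of the third paragraph, which is precisely where compactness of $K$ (and thus the $E_{\mathrm{fin}}^0$ hypothesis) is essential.
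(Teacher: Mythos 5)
Your proof is correct, and the overall skeleton matches the paper's: both arguments show the fibre $r^{-1}(v)$ is finite and nonempty, surround its points by disjoint precompact open bisections, and then shrink a neighbourhood of $v$ until its preimage is swallowed by those bisections. The difference lies in how the crucial ``no stray edges'' step is justified. The paper first arranges the bisections to have a common range $N$, observes that $|r^{-1}(w)| \ge |r^{-1}(v)|$ on $N$, and then rules out strict inequality near $v$ by a contradiction argument with nets: a net of stray edges $e_\alpha$ over $v_\alpha \to v$ would have a convergent subnet in the compact set $r^{-1}(\overline{N})$ whose limit is a point of $r^{-1}(v)$ outside $\bigcup U_e$. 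You instead argue directly: the set $C = r^{-1}(\overline{N_0}) \setminus \bigcup_i W_i$ of potential stray edges is compact, so $r(C)$ is closed and misses $v$, and removing $r(C)$ from $N_0$ gives the required neighbourhood. Your route is more elementary (no nets or subnets) and avoids the fibre-cardinality bookkeeping entirely, at the cost of nothing; both proofs use compactness of $r^{-1}(\overline{N_0})$, i.e.\ the hypothesis $v \in E_{\mathrm{fin}}^0$, in the same essential way. One small point worth making explicit: Remark~\ref{property of regular space} by itself gives $\overline{V} \subset V_1 \cap V_2 \cap E^0_{\mathrm{rg}} \cap N_0$ but not compactness of $\overline{V}$; you should also intersect with a precompact open neighbourhood of $v$, which exists by local compactness of $E^0$.
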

\begin{proof}
Since $r$ is a local homeomorphism, it is in particular an open map (see, for example,
\cite[Page~4289]{Katsura:TAMS04}). Thus $r(E^1)$ is open in $E^0$. Since $r(E^1)$ is also closed,
the interior of $\overline{r(E^1)}$ is exactly $r(E^1)$. By Remark~\ref{properties of special
subsets of E^0},
\begin{equation}\label{E_rg^0 when r is a local homeomorphism with closed range}
 E_{\mathrm{rg}}^0=E_{\mathrm{fin}}^0 \cap r(E^1).
\end{equation}
Fix $v \in E_{\mathrm{rg}}^0$. By Remark~\ref{properties of special subsets of E^0}, $r^{-1}(v)$ is
a nonempty compact subset of $E^1$. Since $r$ is a local homeomorphism, $r^{-1}(v)$ is a finite set. Since $E^1$ is locally compact Hausdorff, we can separate points in $r^{-1}(v)$ by mutually disjoint precompact open bisections $\{U_e:\ e \in r^{-1}(v)\}$. We can assume, by shrinking if necessary, that $U_e$ have common range $N$, such that $\overline{N} \subset E_{\mathrm{rg}}^0$. We have $\vert r^{-1}(w)\vert \geq \vert r^{-1}(v)\vert$ for all $w \in N$.

We claim that there exists an open neighbourhood $V$ of $v$ such that $V \subset N$, and $\vert
r^{-1}(w)\vert =\vert r^{-1}(v)\vert$ for all $w \in V$. Suppose for a contradiction that there
exists a convergent net  $(v_\alpha)_{\alpha \in \Lambda} \subset N$ with limit $v$ satisfying
$\vert r^{-1}(v_\alpha)\vert> \vert r^{-1}(v)\vert$ for all $\alpha \in \Lambda$. So for any
$\alpha \in \Lambda$, there exists $e_\alpha \notin \bigcup_{e \in r^{-1}(v)}U_e$, such that
$r(e_\alpha)=v_\alpha$. Since $r^{-1}(\overline{N})$ is compact,
\cite[Theorem~IV.3]{ReedSimon:FunctAnal} implies that there exists a convergent subnet $(e_a')_{a
\in A}$ of $(e_\alpha)_{\alpha \in \Lambda}$ with the limit $e'$ not in $\bigcup_{e \in
r^{-1}(v)}U_e$. By the continuity of $r$, we have $r(e')=v$, which is a contradiction.

Hence there exists an open neighbourhood $V$ of $v$ satisfying $V \subset N$, such that $\vert
r^{-1}(w)\vert =\vert r^{-1}(v)\vert$ for all $w \in V$. So with $\mathcal{U}=\{ U_e \cap r^{-1}(V)
: e \in r^{-1}(v)\}$, the pair $(\mathcal{U},V)$ is a precompact open local $r$-fibration.
\end{proof}

\begin{lemma}\label{nonnegative functions supported by local r-fibration generates C_0(E_rg^0)}
Let $E$ be a topological graph. Suppose that $r$ is a local homeomorphism and $r(E^1)$ is closed.
Let $\mathcal{G}$ be the set of all nonnegative functions $f$ in $C_c(E_{\mathrm{rg}}^0)$ such that
$\supp(f) \subset V$ for some open local $r$-fibration $(\mathcal{U},V)$. Then $\mathcal{G}$
generates $C_0(E_{\mathrm{rg}}^0)$.
\end{lemma}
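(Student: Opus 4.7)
The plan is to show that the linear span of $\mathcal{G}$ contains every nonnegative function in $C_c(E_{\mathrm{rg}}^0)$; since such functions span $C_c(E_{\mathrm{rg}}^0)$ (by splitting into real and imaginary, then positive and negative parts), and $C_c(E_{\mathrm{rg}}^0)$ is norm-dense in $C_0(E_{\mathrm{rg}}^0)$, this gives that $\mathcal{G}$ generates $C_0(E_{\mathrm{rg}}^0)$ even as a Banach space (and so certainly as a $C^*$-algebra).

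To do this, fix a nonnegative $g \in C_c(E_{\mathrm{rg}}^0)$, and set $K := \supp(g) \subset E_{\mathrm{rg}}^0$. For each $v \in K$, Lemma~\ref{existence of precompact open local r-fibration} supplies a precompact open local $r$-fibration $(\mathcal{U}_v, V_v)$ with $v \in V_v \subset \overline{V_v} \subset E_{\mathrm{rg}}^0$. By Remark~\ref{property of regular space}, I can then pick an open neighbourhood $W_v$ of $v$ with $v \in W_v \subset \overline{W_v} \subset V_v$. Observe that $(\{U \cap r^{-1}(W_v) : U \in \mathcal{U}_v\}, W_v)$ is itself an open local $r$-fibration, since intersecting the bisections $U \in \mathcal{U}_v$ with the open set $r^{-1}(W_v)$ preserves disjointness, openness, the bisection property, and the fact that $r$ maps each piece homeomorphically onto $W_v$.

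By compactness of $K$, choose $v_1,\dots,v_n \in K$ so that $\{W_{v_i}\}_{i=1}^n$ covers $K$. Apply Remark~\ref{POU on subspace} to this cover to obtain a decomposition $g = \sum_{i=1}^n g_i$ in $C_c(E^0)$ with $\osupp(g_i) \subset W_{v_i} \cap K$. Each $g_i$ is nonnegative, because the functions $h_N$ of Remark~\ref{POU on subspace} take values in $[0,1]$ and $g \geq 0$. Moreover
\[
\supp(g_i) \subset \overline{W_{v_i} \cap K} \subset \overline{W_{v_i}} \subset V_{v_i} \subset E_{\mathrm{rg}}^0,
\]
so $g_i \in C_c(E_{\mathrm{rg}}^0)$ and its support is contained in the open set $V_{v_i}$ of the open local $r$-fibration $(\mathcal{U}_{v_i}, V_{v_i})$. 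Hence $g_i \in \mathcal{G}$, and $g$ lies in the linear span of $\mathcal{G}$.

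The only subtle point to watch is the support control: the partition-of-unity construction in Remark~\ref{POU on subspace} gives $\osupp(g_i) \subset W_{v_i} \cap K$, but for membership in $\mathcal{G}$ one needs the full (closed) support to lie in the open set of an $r$-fibration. This is exactly why I shrink from $V_v$ to $W_v$ with $\overline{W_v} \subset V_v$ before applying the partition of unity; this shrinking, rather than any subtle fact about $r$-fibrations, is the real bookkeeping step.
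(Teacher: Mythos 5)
Your proof is correct and follows essentially the same route as the paper's: cover $\supp(g)$ by the open local $r$-fibrations supplied by Lemma~\ref{existence of precompact open local r-fibration} and decompose $g$ via the partition of unity of Remark~\ref{POU on subspace}. Your extra shrinking step $\overline{W_v} \subset V_v$ (to guarantee the \emph{closed} support of each piece lands inside the fibration's open set) and your explicit reduction to nonnegative functions are careful refinements of bookkeeping that the paper's proof passes over silently, not a different argument.
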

\begin{proof}
In order to prove $\mathcal{G}$ generates $C_0(E_{\mathrm{rg}}^0)$, it suffices to show the linear
span of $\mathcal{G}$ is $C_c(E_{\mathrm{rg}}^0)$. Fix $f \in C_c(E_{\mathrm{rg}}^0)$. By
Lemma~\ref{existence of precompact open local r-fibration}, for any $v \in \supp(f)$, there exists
an open local $r$-fibration $(\mathcal{U}_v,V_v)$, such that $v \in V_v$. Remark~\ref{POU
on subspace} yields a finite subset $F \subset \supp(f)$ and a finite collection of functions
$\{f_v : v \in F\} \subset C_c(E_{\mathrm{rg}}^0)$, such that $\{V_v : v \in F\}$ is an open cover
of $\supp(f)$, $\supp(f_v) \subset V_v$, for all $v \in F$, and $\sum_{v \in F}f_v=f$.
\end{proof}

\begin{proof}[Proof of Corollary~\ref{covariant condition of top graph with local homeomorphic range map}]
Let $(\psi,\pi)$ be a covariant Toeplitz representation of $E$. Let $\mathcal{G}$ be the set of all
nonnegative functions $f$ in $C_c(E_{\mathrm{rg}}^0)$ such that $\supp(f) \subset V$ for some open
local $r$-fibration $(\mathcal{U},V)$. Then Lemma~\ref{nonnegative functions supported by local
r-fibration generates C_0(E_rg^0)} implies that $\mathcal{G}$ generates $C_0(E_{\mathrm{rg}}^0)$.
Fix $f \in \mathcal{G}$ and an open local $r$-fibration $(\mathcal{U},V)$ with $\supp(f) \subset
V$. Since $\mathcal{U}$ is a finite cover of $r^{-1}(\supp(f))$ by open $s$-sections, each
$\osupp(r^*_U f) \subset U \cap r^{-1}(\supp(f))$, and $\sum_{U \in \mathcal{U}}r^*_U f=f \circ r$.
By Proposition~\ref{reduced covariant Toep rep is covariant Toep rep}, we have $\pi(f)=\sum_{U \in
\mathcal{U}}\psi(\sqrt{r^*_U f})\psi(\sqrt{r^*_U f})^*$. The second statement follows easily from
the construction of $\mathcal{G}$. The converse of the first statement follows from
Definition~\ref{dfn:new covariance} and Remark~\ref{converse statement proof of covariant
condition}.
\end{proof}

\subsection{The \texorpdfstring{$C^*$}{C*}-algebra generated by a Toeplitz representation}

In this subsection we provide some technical results which may prove useful in using our
descriptions of the $C^*$-algebras associated to topological graphs. Proposition~\ref{cor:checkable
toeplitz} is intended as an aid to constructing representations; and Proposition~\ref{prp:spanning
elements} provides a well-behaved collection of spanning elements for the image of any Toeplitz
representation of $E$, and also a formula for computing products of these spanning elements.

To construct Toeplitz representations of a topological graph, one needs to build linear maps $\psi:
C_c(E^1) \to B$ that are bounded in the bimodule norm $\Vert\cdot\Vert_{C_0(E^0)}$. The following
technical result simplifies the task by showing that it is enough to define $\psi$ on functions
that are dense in supremum norm on $C_0(U)$ for a suitable family of open $s$-sections $U$.

\begin{prop}\label{cor:checkable toeplitz}
Let $E$ be a topological graph, let $\mathcal{B}$ be an open base for the topology on $E^1$
consisting of $s$-sections, and let $\mathcal{F} \subset C_c(E^1)$ be a collection of nonnegative
functions such that $\osupp(x)$ is an $s$-section for all $x \in \mathcal{F}$. Suppose that
$\mathcal{G} \subset C_0(E^0)$ generates $C_0(E^0)$, and that for each $U \in \mathcal{B}$,
\begin{equation}\label{dense in sup norm implies dense in C_0(E^0) norm}
\lsp\{x \in \mathcal{F} : \osupp(x) \subset U\} \text{ is dense in } C_0(U) \text{ under the supremum norm}.
\end{equation}
Then $X_0 := \lsp\mathcal{F}$ is dense in $X(E)$. Let $B$ be a $C^*$-algebra. Suppose that $\psi_0:
X_0 \to B$ is a linear map, that $\pi : C_0(E^0) \to B$ is a homomorphism, and that
\begin{equation}\label{pi(<x,y>)=psi(x)^*psi(y)}
\pi(\widehat{\sqrt{xy}})=\psi_0(x)^*\psi_0(y)\quad\text{ for all $x,y \in \mathcal{F}$ (the product in $C_c(E^1)$ is pointwise)}.
\end{equation}
Then $\psi_0$ extends uniquely to a bounded linear map $\psi$ on $C_c(E^1)$. If the extension
$\psi$ satisfies
\begin{equation}\label{psi(fx)=pi(f)psi(x) for f in G x in F}
\psi(f \cdot x)= \pi(f)\psi_0(x)\quad\text{for all $f \in \mathcal{G}$ and $x \in \mathcal{F}$,}
\end{equation}
then $(\psi, \pi)$ is a Toeplitz representation of $E$.
\end{prop}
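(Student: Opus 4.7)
The plan is to handle the three assertions in sequence: density of $X_0$ in $X(E)$, existence of the bounded extension $\psi$, and verification of the three axioms of Definition~\ref{reduced Toep rep} for $(\psi,\pi)$. For density, given $x \in C_c(E^1)$ I would cover $\supp(x)$ by finitely many members of $\mathcal{B}$, apply Remark~\ref{POU on subspace} to write $x = \sum_i x_i$ with each $\supp(x_i)$ in some $U_i \in \mathcal{B}$, and approximate each $x_i$ in supremum norm by elements of $\lsp\{x' \in \mathcal{F} : \osupp(x') \subset U_i\}$ via~\eqref{dense in sup norm implies dense in C_0(E^0) norm}. The key observation is that if $y \in C_c(E^1)$ is supported in an $s$-section, then $\langle y,y\rangle_{C_0(E^0)}(v) = \sum_{s(e)=v}|y(e)|^2$ has at most one nonzero summand, so $\|y\|_{C_0(E^0)} = \|y\|_\infty$, and supremum-norm approximation becomes module-norm approximation.

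For the bounded extension, $\osupp(x) \cap \osupp(y)$ is still an $s$-section for $x,y \in \mathcal{F}$, and a pointwise check gives $\widehat{\sqrt{xy}} = \langle x, y\rangle_{C_0(E^0)}$; so~\eqref{pi(<x,y>)=psi(x)^*psi(y)} reads $\pi(\langle x, y\rangle_{C_0(E^0)}) = \psi_0(x)^*\psi_0(y)$ on $\mathcal{F}$. Sesquilinearity promotes this to $X_0$, and the $C^*$-identity yields $\|\psi_0(z)\|^2 \le \|z\|_{C_0(E^0)}^2$; hence $\psi_0$ extends by continuity to a bounded linear map $\psi$ on $X(E) \supset C_c(E^1)$ satisfying $\pi(\langle x, y\rangle) = \psi(x)^*\psi(y)$ throughout $X(E)$. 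Conditions~\eqref{it:s*s}~and~\eqref{it:s*t} of Definition~\ref{reduced Toep rep} then follow at once, since $\widehat x = \langle x,x\rangle$ on an $s$-section and $\langle x, y\rangle$ vanishes when the supports lie in disjoint $s$-sections.

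For condition~\eqref{it:actions}, I would introduce the set $S = \{f \in C_0(E^0) : \psi(f \cdot x) = \pi(f)\psi(x) \text{ for all } x \in X(E)\}$. Hypothesis~\eqref{psi(fx)=pi(f)psi(x) for f in G x in F}, combined with linearity and continuity in $x$ (the left action $f\cdot{-}$ is bounded in module norm), gives $\mathcal{G} \subset S$, and a routine check shows $S$ is a closed subalgebra of $C_0(E^0)$. Since $\mathcal{G}$ generates $C_0(E^0)$ as a $C^*$-algebra, the remaining task, and the main obstacle, is to show that $S$ is $*$-closed; a closed subalgebra containing a $C^*$-generating set need not be $*$-closed in general, so one has to exploit the inner-product identity. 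To this end, fix $f \in S$ and $x \in X(E)$ and set $w = \psi(\bar f \cdot x) - \pi(\bar f)\psi(x)$; I aim to show $w^*w = 0$. Expanding produces four terms, and using the Hilbert-module identity $\langle a \cdot y, z\rangle = \langle y, \bar a \cdot z\rangle$, the factorisation $\pi(|f|^2) = \pi(f)^*\pi(f)$, and the assumption $f \in S$ (which gives both $\pi(f)\psi(x) = \psi(f \cdot x)$ and $\pi(f)\psi(\bar f \cdot x) = \psi(|f|^2 \cdot x)$), each of the four terms evaluates to $\pi(\langle x, |f|^2 \cdot x\rangle)$; with signs $+,-,-,+$ they sum to zero. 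Hence $w^*w = 0$, so $w = 0$, yielding $\bar f \in S$ and completing the proof.
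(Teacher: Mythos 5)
Your proposal is correct and follows essentially the same route as the paper: the same partition-of-unity density argument using the coincidence of the supremum and module norms on functions supported in an $s$-section, the same identification $\widehat{\sqrt{xy}}=\langle x,y\rangle_{C_0(E^0)}$ leading to boundedness of $\psi_0$ via the $C^*$-identity, and the same continuity argument for the axioms of Definition~\ref{reduced Toep rep}. The one place you go beyond the paper is the final step, where the paper simply asserts that Equation~\eqref{psi(fx)=pi(f)psi(x) for f in G x in F} and continuity give condition~\eqref{it:actions}; your explicit verification that the set $S$ is $*$-closed (the four-term expansion of $w^*w$, with the fourth term handled via $\psi(x)^*\pi(\bar f)^*\pi(\bar f)\psi(x)=\psi(f\cdot x)^*\psi(f\cdot x)$) is a correct and worthwhile filling-in of a detail the paper compresses into one line.
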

\begin{proof}
Fix $x \in C_c(E^1)$. Let $K=\supp(x)$. For each $e \in K$, there exists an open $s$-section $N_e$
containing $e$, such that $N_e \in \mathcal{B}$. Since $K$ is compact, there is a finite subset $F
\subset K$, such that $\{N_e : e \in F\}$ covers $K$. By Remark~\ref{POU on subspace}, there exists
a finite collection of functions $\{x_e : e \in F\} \subset C_c(E^1)$, such that $\osupp(x_e)
\subset N_e \cap K$ for all $e \in F$, and $\sum_{e \in F}x_e=x$. Fix $e \in F$. Since $\osupp(x_e)
\subset N_e$, there exists a sequence $(x_{e,n}) \subset X_0 \cap C_0(N_e)$ converging to $x_e$ in
supremum norm. That $(x_{e,n})$ and $x_e$ vanish off the $s$-section $N_e$ imply that $\Vert
x_{e,n}-x_e \Vert_{C_0(E^0)}=\sup_{e \in E^1}\vert x_{e,n}-x_e \vert$. Hence $\sum_{e \in F}x_{e,n}
\to x$ in $\Vert\cdot\Vert_{C_0(E^0)}$ norm. Therefore $X_0$ is dense in $X(E)$.

Fix $x$, $y \in \mathcal{F}$. Since $x$, $y$ are nonnegative, $\widehat{\sqrt{xy}}=\langle x,y
\rangle_{C_0(E^0)}$. Hence~\eqref{pi(<x,y>)=psi(x)^*psi(y)} implies that $\pi(\langle x,y
\rangle_{C_0(E^0)})=\psi_0(x)^*\psi_0(y)$. Linearity of $\psi_0$ and $\pi$ gives $\pi(\langle x,y
\rangle_{C_0(E^0)})=\psi_0(x)^*\psi_0(y)$, for all $x$, $y \in X_0$. By Remark~\ref{property of
Toep rep}, $\psi_0$ is bounded, and hence extends uniquely to a bounded linear map $\tilde{\psi}$
on $X(E)$. Then $\psi := \tilde{\psi}|_{C_c(E^1)}$ is the required map.

Equation~\eqref{psi(fx)=pi(f)psi(x) for f in G x in F} and continuity imply that $(\psi,\pi)$ is a
Toeplitz representation of $E$.
\end{proof}

\begin{rmk}
To prove Proposition~\ref{cor:checkable toeplitz} we showed that Equation~\eqref{dense in sup norm
implies dense in C_0(E^0) norm} implies that $X_0$ is dense in $X(E)$ under the
$\Vert\cdot\Vert_{C_0(E^0)}$ norm, and then deduced that $(\psi, \pi)$ extends to a Toeplitz
representation of $E$. So replacing Equation~\eqref{dense in sup norm implies dense in C_0(E^0)
norm} with the hypothesis that $X_0$ is dense in $X(E)$ would yield a formally stronger result.
However, Equation~\eqref{dense in sup norm implies dense in C_0(E^0) norm} is in many instances
easier to check.
\end{rmk}

Our next proposition provides a description of the $C^*$-algebra generated by a Toeplitz
representation of $E$ in terms of a spanning family which captures many of the key properties of
the usual spanning family in the Toeplitz algebra of a directed graph. We first need some notation
and two technical lemmas.

Recall that $E^n$ denotes the space $\{\mu = \mu_1 \dots \mu_n : \mu_i \in E^1, s(\mu_i) =
r(\mu_{i+1})\}$ of paths of length $n$ in $E$. We define $r,s : E^n \to E^0$ by $r(\mu) = r(\mu_1)$
and $s(\mu) = s(\mu_n)$, and we give $E^n$ the relative topology inherited from the product space
$\prod^n_{i=1} E^1$. For $x \in C_c(E^n)$ and $f \in C_0(E^0)$ we define $f \cdot x, x \cdot f \in
C_c(E^n)$ by $(f\cdot x)(\mu) = f(r(\mu)) x(\mu)$ and $(x \cdot f)(\mu) = x(\mu) f(s(\mu))$.

For $x_1, \dots, x_n \in C_c(E^1)$, we define $x_1 \diamond \dots \diamond x_n \in C_c(E^n)$ by
\[
(x_1 \diamond \dots \diamond x_n)(\mu) = \prod^n_{i=1} x_i(\mu_i) \quad\text{ for $\mu = \mu_1 \dots \mu_n \in E^n$.}
\]
We use the symbol $\diamond$ for this operation to distinguish it from the pointwise product of
elements of $C_c(E^1)$ appearing in, for example, Equation~\eqref{pi(<x,y>)=psi(x)^*psi(y)}.

The second assertion of the following technical result follows from the discussion preceding
\cite[Proposition~3.3]{Pimsner:FIC97} together with \cite[Proposition~1.27]{Katsura:TAMS04} (see
also \cite[Proposition~9.7]{Raeburn:GraphAlg}). We include the result and a simple proof here for
completeness.

Suppose that $x, y \in C_c(E^n)$ are supported on $s$-sections. Then there is a unique $H(x,y) \in
C_c(E^0)$ that vanishes on $E^0 \setminus \{s(\mu) : x(\mu)y(\mu) \not= 0\}$ and satisfies
\[
    H(x,y)(s(\mu)) = \overline{x(\mu)}y(\mu)\text{ whenever $x(\mu)y(\mu) \not= 0$}.
\]

\begin{lemma}\label{lem:psi^n}
Let $E$ be a topological graph and suppose that $x_1, \dots, x_n$ and $y_1, \dots, y_n$ are
supported on $s$-sections. Let $(\psi,\pi)$ be a Toeplitz representation of $E$. Let $x = x_1
\diamond \dots \diamond x_n$ and $y = y_1 \diamond \dots \diamond y_n$. Then $\pi(H(x,y)) =
\psi(x_n)^* \cdots \psi(x_1)^* \psi(y_1) \cdots \psi(y_n)$. If $x = y$ then $\prod^n_{i=1}
\psi(x_i) = \prod^n_{i=1} \psi(y_i)$.
\end{lemma}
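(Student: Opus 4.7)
The plan is to prove the two assertions by induction on $n$ and then a standard $A^*A$ trick, both driven by Proposition~\ref{property of reduced Toep rep}.

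For the first assertion I would induct on $n$. The base case $n=1$ is essentially Proposition~\ref{property of reduced Toep rep}: when $x_1,y_1$ are supported on open $s$-sections, the sum defining $\langle x_1,y_1\rangle_{C_0(E^0)}(v)$ has at most one nonzero term, so it coincides pointwise with $H(x_1,y_1)$, and the formula $\pi(\langle x_1,y_1\rangle_{C_0(E^0)})=\psi(x_1)^*\psi(y_1)$ is exactly what Proposition~\ref{property of reduced Toep rep} delivers. For the inductive step, let $x=x_1\diamond\cdots\diamond x_n$ and $y=y_1\diamond\cdots\diamond y_n$. Substitute $\psi(x_1)^*\psi(y_1)=\pi(\langle x_1,y_1\rangle_{C_0(E^0)})$ in the middle of the product, then slide the scalar past $\psi(y_2)$ using Definition~\ref{reduced Toep rep}\eqref{it:actions}: $\pi(\langle x_1,y_1\rangle_{C_0(E^0)})\psi(y_2)=\psi(\langle x_1,y_1\rangle_{C_0(E^0)}\cdot y_2)$. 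The modified function $\langle x_1,y_1\rangle_{C_0(E^0)}\cdot y_2$ is still supported on the $s$-section carrying $y_2$, so the inductive hypothesis applies to the length-$(n-1)$ products $x_2\diamond\cdots\diamond x_n$ and $(\langle x_1,y_1\rangle_{C_0(E^0)}\cdot y_2)\diamond y_3\diamond\cdots\diamond y_n$. A direct pointwise computation using $\overline{x_1(\mu_1)}y_1(\mu_1)=\langle x_1,y_1\rangle_{C_0(E^0)}(r(\mu_2))$ shows that the resulting $H$-expression matches $H(x,y)$, closing the induction.

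For the second assertion I would use the $C^*$-identity. Set $A=\prod_{i=1}^n\psi(x_i)-\prod_{i=1}^n\psi(y_i)$ and expand
\[
A^*A=\pi(H(x,x))-\pi(H(x,y))-\pi(H(y,x))+\pi(H(y,y))
\]
by applying the first assertion to each of the four terms. If $x=y$ as elements of $C_c(E^n)$, then $\overline{x(\mu)}y(\mu)=|x(\mu)|^2=|y(\mu)|^2$ at every $\mu$, so all four functions $H(x,x)$, $H(x,y)$, $H(y,x)$, $H(y,y)$ coincide in $C_c(E^0)$. The alternating signs then give $A^*A=0$, hence $A=0$.

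The main obstacle will be the bookkeeping in the inductive step: verifying that $\langle x_1,y_1\rangle_{C_0(E^0)}\cdot y_2$ still lies in the class to which the inductive hypothesis applies, and checking that the $H$-function reassembles correctly after peeling off the innermost pair. Everything else is mechanical once Proposition~\ref{property of reduced Toep rep} is in hand.
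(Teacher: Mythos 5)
Your proposal is correct and follows essentially the same route as the paper: an induction on $n$ driven by the identities $\psi(a)^*\psi(b)=\pi(\langle a,b\rangle_{C_0(E^0)})$ and $\pi(f)\psi(b)=\psi(f\cdot b)$, followed by the $C^*$-identity expansion of $A^*A$ for the second assertion. The only difference is cosmetic: you peel off the innermost pair $x_1,y_1$ and push $\langle x_1,y_1\rangle_{C_0(E^0)}$ rightward onto $y_2$, whereas the paper peels off the outermost pair via the factorisation $H(x,y)=\langle x_n, H(x_1\diamond\dots\diamond x_{n-1},y_1\diamond\dots\diamond y_{n-1})\cdot y_n\rangle_{C_0(E^0)}$.
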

\begin{proof}
By Proposition~\ref{reduced Toep rep is Toep rep}, $(\psi,\pi)$ extends to a Toeplitz
representation of $X(E)$. We have $H(x,y) = \langle x_n, H(x_1 \diamond \dots \diamond x_{n-1}, y_1
\diamond \dots \diamond y_{n-1})\cdot y_n\rangle$, and hence
\[
    \pi(H(x,y)) = \psi(x_n)^* \pi(H(x_1 \diamond \dots \diamond x_{n-1}, y_1 \diamond \dots \diamond y_{n-1}) \psi(y_n).
\]
The first assertion now follows by induction. For the second assertion, we use the first to see
that
\begin{align*}
\Big(\prod^n_{i=1} \psi(x_i) - \prod^n_{i=1} \psi(y_i)\Big)^*\Big(\prod^n_{i=1} \psi(x_i) - {}&\prod^n_{i=1} \psi(y_i)\Big) \\
    &= \pi(H(x,x) - H(x,y) - H(y,x) + H(y,y)),
\end{align*}
which is equal to zero since $x = y$.
\end{proof}

\begin{lemma}\label{lem:pass through}
Let $E$ be a topological graph. Suppose that $x_1, \dots, x_n \in C_c(E^1)$ are supported on
$s$-sections, and fix $f \in C_0(E^0)$. Then there exists $\tilde{f} \in C_c(E^0)$ such that
\begin{equation}\label{eq:pass f through}
\tilde{f}(s(\mu)) = f(r(\mu))\quad\text{ whenever $(x_1 \diamond \dots \diamond x_n)(\mu) \not= 0$.}
\end{equation}
For any such $\tilde{f}$, we have $f \cdot (x_1 \diamond \dots \diamond x_n) = (x_1 \diamond \dots
\diamond x_n)\cdot \tilde{f}$, and $\pi(f)\prod^n_{i=1} \psi(x_i) = \prod^n_{i=1}\psi(x_i)
\pi(\tilde{f})$ for any Toeplitz representation $(\psi, \pi)$ of $E$.
\end{lemma}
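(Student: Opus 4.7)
The plan is to build $\tilde f$ from $f$ by transporting it along the concatenated paths, verify the pointwise identity on $C_c(E^n)$, and then deduce the operator-level identity using the ``well-definedness'' clause of Lemma~\ref{lem:psi^n}.

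For existence of $\tilde f$, write $K_i = \supp(x_i)$ and let $K = \{\mu \in E^n : \mu_i \in K_i\text{ for all }i\}$, a compact superset of $\supp(x_1 \diamond \cdots \diamond x_n)$. I first show that $s : K \to E^0$, $\mu \mapsto s(\mu_n)$, is injective. If $s(\mu_n) = s(\mu'_n)$, then since $\mu_n, \mu'_n$ both lie in the $s$-section containing $\supp(x_n)$, injectivity of $s$ there forces $\mu_n = \mu'_n$; thus $s(\mu_{n-1}) = r(\mu_n) = r(\mu'_n) = s(\mu'_{n-1})$, and a downward induction yields $\mu = \mu'$. A continuous injection from a compactum to a Hausdorff space is a homeomorphism onto its image, so $s$ identifies $K$ with the compact set $s(K) \subset E^0$. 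Define $g \in C(s(K))$ by $g(s(\mu)) = f(r(\mu))$, and extend $g$ to some $\tilde f \in C_c(E^0)$ by applying Tietze inside any precompact open neighbourhood of $s(K)$ and multiplying by a compactly supported bump function that is $1$ on $s(K)$ (the existence of such a neighbourhood and bump function follow from Remark~\ref{property of regular space}). By construction $\tilde f$ satisfies~\eqref{eq:pass f through}.

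Next, the function-space identity $f \cdot (x_1 \diamond \cdots \diamond x_n) = (x_1 \diamond \cdots \diamond x_n) \cdot \tilde f$ is immediate from~\eqref{eq:pass f through}: both sides vanish where $x_1 \diamond \cdots \diamond x_n$ vanishes, and elsewhere they agree because $f(r(\mu)) = \tilde f(s(\mu))$. For the operator identity, rewrite this equality of functions in $C_c(E^n)$ as
\[
(f \cdot x_1) \diamond x_2 \diamond \cdots \diamond x_n = x_1 \diamond \cdots \diamond x_{n-1} \diamond (x_n \cdot \tilde f).
\]
Both decompositions consist of factors supported in $s$-sections (since $\supp(f \cdot x_1) \subseteq \supp(x_1)$ and $\supp(x_n \cdot \tilde f) \subseteq \supp(x_n)$), so the second assertion of Lemma~\ref{lem:psi^n} yields
\[
\psi(f \cdot x_1)\psi(x_2)\cdots\psi(x_n) = \psi(x_1)\cdots\psi(x_{n-1})\psi(x_n \cdot \tilde f).
\]
The left-hand side equals $\pi(f)\prod_{i=1}^{n}\psi(x_i)$ by Definition~\ref{reduced Toep rep}\eqref{it:actions}, and the right-hand side equals $\prod_{i=1}^{n}\psi(x_i)\pi(\tilde f)$ via the right-action identity $\psi(x_n \cdot \tilde f) = \psi(x_n)\pi(\tilde f)$ from Proposition~\ref{reduced Toep rep is Toep rep} and Remark~\ref{property of Toep rep}.

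The main obstacle is the injectivity of $s|_K$ in Step~1: this is what makes the $s$-section hypothesis essential, since without it $g$ could not be defined unambiguously on $s(K)$ and the whole construction would collapse. The Tietze extension step is routine, and Steps~2 and~3 are then short formal manipulations.
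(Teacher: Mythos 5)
Your proof is correct and follows essentially the same route as the paper: construct $\tilde f$ by transporting $f$ along the $s$-section $K$ via Tietze, note the function identity is immediate, and deduce the operator identity from Lemma~\ref{lem:psi^n} together with the left and right actions. You are in fact slightly more careful than the paper in two places the paper leaves implicit --- verifying that $s$ is injective on $K$ (i.e.\ that the diamond product is supported on an $s$-section of $E^n$) and cutting the Tietze extension down by a bump function so that $\tilde f$ genuinely lies in $C_c(E^0)$ rather than just $C_0(E^0)$.
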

\begin{proof}
The second assertion follows immediately from the first by definition of $f \cdot (x_1 \diamond
\dots \diamond x_n)$ and $(x_1 \diamond \dots \diamond x_n)\cdot \tilde{f}$. The final assertion
then follows from Lemma~\ref{lem:psi^n}. So we just need to prove the first assertion. Let $x :=
x_1 \diamond \dots \diamond x_n$. Fix $f \in C_0(E^0)$. Since $K := \supp(x) \subseteq E^n$ is an
$s$-section, there is a well-defined continuous function from $s(K)$ to $r(K)$ given by $s(\mu)
\mapsto r(\mu)$ for $\mu \in K$. So there is a continuous function $\tilde{f}_0 \in C(s(K))$ given
by $\tilde{f}_0(s(\mu)) = f(r(\mu))$ for all $\mu \in K$. Since $s(K)$ is compact, an application
of the Tietze extension theorem shows that $\tilde{f}$ has an extension $\tilde{f} \in C_0(E^0)$,
which satisfies~\eqref{eq:pass f through} by definition.
\end{proof}

\begin{prop}\label{prp:spanning elements}
Let $E$ be a topological graph and let $(\psi,\pi)$ be a Toeplitz representation of $E$. Let
$\mathcal{B}$ and $\mathcal{F}$ be as in Proposition~\ref{cor:checkable toeplitz}, and suppose that
$\supp(x)$ is an $s$-section for each $x \in \mathcal{F}$. Then
\begin{enumerate}
\item\label{it:spanning} $C^*(\psi,\pi)$ is densely spanned by elements of the form
    \[
        \psi(x_1)\cdots\psi(x_n) \pi(f) \psi(y_m)^*\cdots\psi(y_1)^*
    \]
    where $m,n \ge 0$, $f \in C_c(E^0)$, the $x_i, y_j$ all belong to $\mathcal{F}$, each
    $s(\osupp(x_i)) \cap r(\osupp(x_{i+1})) \neq \emptyset$ and each $s(\osupp(y_i)) \cap
    r(\osupp(y_{i+1})) \neq \emptyset$, and $s(\osupp(x_n)) \cap s(\osupp(y_m)) \cap \osupp(f)
    \neq \emptyset$.
\item\label{it:mult} Let $\psi(w_1) \dots \psi(w_m)\pi(f)\psi(x_n)^* \dots \psi(x_1)^*$ and
    $\psi(y_1) \dots \psi(y_p)\pi(g) \psi(z_q)^* \dots \psi(z_1)^*$ be spanning elements as
    in~\eqref{it:spanning} with $p \ge n$. Let $x = x_1 \diamond \dots \diamond x_n$ and $y =
    y_1 \diamond \dots \diamond y_n$. Fix $k \in C_0(E^0)$ such that $(f H(x,y)) \cdot (y_{n+1}
    \diamond \dots \diamond y_p) = (y_{n+1} \diamond \dots \diamond y_p)\cdot k$ as in
    Lemma~\ref{lem:pass through}. Then
    \begin{align*}
    \big(\psi(w_1) \dots \psi(w_m)\,\pi(f)\,\psi&(x_n)^* \dots \psi(x_1)^*\big)
        \big(\psi(y_1) \dots \psi(y_p)\,\pi(g)\,\psi(z_q)^* \dots \psi(z_1)^*\big) \\
            & = \psi(w_1) \dots \psi(w_m) \psi(y_{n+1}) \dots \psi(y_p)\,
                \pi(k g)\,\psi(z_q)^* \dots \psi(z_1)^*.
    \end{align*}
\end{enumerate}
\end{prop}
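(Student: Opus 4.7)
The plan is to establish part~(\ref{it:spanning}) by reducing arbitrary monomials in the generators to the desired normal form; part~(\ref{it:mult}) will then follow from a direct computation using Lemma~\ref{lem:psi^n} and Lemma~\ref{lem:pass through}.

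For part~(\ref{it:spanning}), I would start from the fact that $C^*(\psi,\pi)$ is densely spanned by monomials in $\psi(C_c(E^1)) \cup \pi(C_0(E^0)) \cup \psi(C_c(E^1))^*$. Using boundedness of the extension $\widetilde{\psi}$ to $X(E)$ (Proposition~\ref{reduced Toep rep is Toep rep}) together with density of $\lsp\mathcal{F}$ in $X(E)$ (Proposition~\ref{cor:checkable toeplitz}), each factor $\psi(x)$ can be norm-approximated by linear combinations of $\psi(\xi)$ with $\xi \in \mathcal{F}$; since each such $\xi$ has $\supp(\xi)$ an $s$-section by hypothesis, the assumptions of Lemma~\ref{lem:psi^n} and Lemma~\ref{lem:pass through} are then satisfied. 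Similarly, $\pi(C_c(E^0))$ is dense in $\pi(C_0(E^0))$. So it suffices to put into normal form monomials whose $\psi$-factors involve elements of $\mathcal{F}$ and whose $\pi$-factors involve elements of $C_c(E^0)$.

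The reduction is obtained by alternately applying: (i)~Lemma~\ref{lem:psi^n} in the case $n=1$, which gives $\psi(\xi)^*\psi(\eta) = \pi(H(\xi,\eta))$ and thereby collapses any internal $\psi^*\psi$-pair into a single $\pi$-factor; and (ii)~Lemma~\ref{lem:pass through} (together with its adjoint), which commutes $\pi$-factors past runs of $\psi$- or $\psi^*$-factors. Iterating collects all $\psi$-factors on the left, all $\psi^*$-factors on the right, and a single $\pi(f)$ in between. To enforce the overlap conditions, note that if $s(\osupp(x_i)) \cap r(\osupp(x_{i+1})) = \emptyset$ then $x_i \diamond x_{i+1} = 0$ in $C_c(E^2)$, so the second half of Lemma~\ref{lem:psi^n} forces $\psi(x_i)\psi(x_{i+1}) = 0$ (and symmetrically on the $y$-side). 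If $s(\osupp(x_n)) \cap \osupp(f) = \emptyset$ then $x_n \cdot f = 0$ in $C_c(E^1)$, so Remark~\ref{property of Toep rep} gives $\psi(x_n)\pi(f)=0$, and analogously $\pi(f)\psi(y_m)^* = 0$ when $s(\osupp(y_m)) \cap \osupp(f) = \emptyset$. Finally, I would use Remark~\ref{POU on subspace} to write $f$ as a finite sum of functions each of whose support lies inside $s(\osupp(x_n)) \cap s(\osupp(y_m))$, discarding any piece whose $\osupp$ misses this intersection. I expect this bookkeeping around overlap conditions and simultaneous approximations to be the main obstacle.

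For part~(\ref{it:mult}) the calculation is direct. With $p \ge n$, a single application of Lemma~\ref{lem:psi^n} collapses $\psi(x_n)^*\cdots\psi(x_1)^*\psi(y_1)\cdots\psi(y_n)$ to $\pi(H(x,y))$, leaving $\psi(y_{n+1})\cdots\psi(y_p)$ untouched. Using $\pi(f)\pi(H(x,y)) = \pi(fH(x,y))$ and then applying Lemma~\ref{lem:pass through} with the function $fH(x,y)$ and the path $y_{n+1}\diamond\cdots\diamond y_p$, I commute $\pi(fH(x,y))$ past $\psi(y_{n+1})\cdots\psi(y_p)$ to obtain $\psi(y_{n+1})\cdots\psi(y_p)\pi(k)$ with $k$ as in the statement. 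Collapsing $\pi(k)\pi(g) = \pi(kg)$ gives the claimed formula.
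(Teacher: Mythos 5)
Your proposal is correct and follows essentially the same route as the paper: part~\eqref{it:mult} is the identical two-step computation via Lemma~\ref{lem:psi^n} and Lemma~\ref{lem:pass through}, and part~\eqref{it:spanning} rests on the same ingredients (density of $\lsp\mathcal{F}$ in $X(E)$ and the vanishing of $\psi(x_1)\psi(x_2)$ and $\psi(x_1)\psi(x_2)^*$ when the relevant supports fail to overlap), the only difference being that you re-derive the normal-form reduction that the paper simply cites from \cite[Proposition~2.7]{Katsura:JFA04}. The one imprecision is your final partition-of-unity step --- $f$ cannot in general be written as a sum of functions supported inside $s(\osupp(x_n)) \cap s(\osupp(y_m))$ --- but no decomposition is needed: if $s(\osupp(x_n)) \cap s(\osupp(y_m)) \cap \osupp(f) = \emptyset$ then $\psi(x_n)\pi(f)\psi(y_m)^* = \psi(x_n\cdot f)\psi(y_m)^*$ already vanishes because $s(\osupp(x_n\cdot f))$ and $s(\osupp(y_m))$ are disjoint, so such terms can simply be discarded.
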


\begin{rmk}
Consider the situation of Proposition~\ref{prp:spanning elements}\eqref{it:mult} but with $p < n$.
Let $x = x_1 \diamond \dots \diamond x_p$ and $y = y_1 \diamond \dots \diamond y_p$, and fix $k'
\in C_0(E^0)$ such that $(H(x,y) g) \cdot (x_{p+1} \diamond \dots \diamond x_n) = (x_{p+1} \diamond
\dots \diamond x_n) \cdot k'$. Taking adjoints in Proposition~\ref{prp:spanning
elements}\eqref{it:mult} gives
    \begin{align*}
    \big(\psi(w_1) \dots \psi(w_m)\,\pi(f)\,\psi&(x_n)^* \dots \psi(x_1)^*\big)
        \big(\psi(y_1) \dots \psi(y_p)\,\pi(g)\,\psi(z_q)^* \dots \psi(z_1)^*\big) \\
            & = \psi(w_1) \dots \psi(w_m)\,\pi(fk')\,\psi(x_n)^* \cdots \psi(x_{p+1})\psi(z_q)^* \dots \psi(z_1)^*.
    \end{align*}
\end{rmk}

\begin{proof}[Proof of Proposition~\ref{prp:spanning elements}]
\eqref{it:spanning} Proposition~\ref{reduced Toep rep is Toep rep} implies that
$(\widetilde{\psi},\pi)$ is a Toeplitz representation of $X(E)$, where $\widetilde{\psi}$ is the
unique extension of $\psi$ to $X(E)$. The argument of \cite[Proposition~2.7]{Katsura:JFA04} shows
that $C^*(\psi,\pi)$ is densely spanned by elements of the form
$\psi(x_1)\cdots\psi(x_n)\,\pi(f)\,\psi(y_m)^*\cdots\psi(y_1)^*$ where each $x_i, y_i \in C_c(E_1)$
and $f \in C_c(E^0)$. Fix $x_1$, $x_2 \in C_c(E^1)$ with $s(\osupp(x_1)) \cap r(\osupp(x_2)) =
\emptyset$. Then
\begin{align*}
\Vert \psi(x_1)\psi(x_2) \Vert^2
    &=\Vert \psi(x_2)^*\psi(x_1)^*\psi(x_1)\psi(x_2) \Vert
     =\Vert \psi(x_2)^*\pi(\langle x_1,x_1 \rangle_{C_0(E^0)}) \psi(x_2)\Vert \\
    &=\Vert \psi(x_2)^* \psi(\langle x_1,x_1 \rangle_{C_0(E^0)}\cdot x_2) \Vert
     =0.
\end{align*}
Similarly, $\psi(x_1)\psi(x_2)^*=0$ whenever $s(\osupp(x_1)) \cap s(\osupp(x_2))= \emptyset$. So
$C^*(\psi, \pi)$ is densely spanned by elements $\psi(x_1) \dots \psi(x_m) \pi(f) \psi(y_m)^* \dots
\psi(y_1)^*$ where $f \in C_c(E^0)$, each $s(\osupp(x_i)) \cap r(\osupp(x_{i+1})) \not= \emptyset$,
each $s(\osupp(y_i)) \cap r(\osupp(y_{i+1})) \not= \emptyset$, and $s(\osupp(x_n)) \cap
s(\osupp(y_m)) \cap \osupp(f) \neq \emptyset$. Since Proposition~\ref{cor:checkable toeplitz}
implies that $X_0 = \lsp{\mathcal{F}}$ is dense in $X(E)$ and hence in $C_c(E^1)$, the first
assertion follows.

\eqref{it:mult} Lemma~\ref{lem:psi^n} implies that
\[
    \pi(f)\psi(x_n)^* \dots \psi(x_1)^* \psi(y_1) \dots \psi(y_p)
        = \pi(f H(x,y)) \psi(y_{n+1}) \dots \psi(y_p).
\]
The result now follows from Lemma~\ref{lem:pass through}.
\end{proof}

\begin{eg}\label{eg:directed graph toeplitz}
Let $E$ be a directed graph regarded as a topological graph under the discrete topology. Let
$\mathcal{G} = \{\delta_v : v \in E^0\}$ and $\mathcal{F} = \{\delta_e : e \in E^1\}$. Then
$\mathcal{G}$ and $\mathcal{F}$ satisfy the hypotheses of Proposition~\ref{cor:checkable toeplitz},
so we recover as an immediate consequence the isomorphism of the Toeplitz algebra $\mathcal{T}(E)$
of the graph bimodule (see \cite{FowlerRaeburn:IUMJ99}~and~\cite{Katsura:TAMS04}) with the Toeplitz
algebra $\mathcal{T}C^*(E)$ of the graph $E$. The usual spanning family and multiplication rule for
$\mathcal{T}C^*(E)$ follows from Proposition~\ref{prp:spanning elements} applied to the same
$\mathcal{F}$.
\end{eg}

\begin{rmk}
The multiplication formula of Proposition~\ref{prp:spanning elements}\eqref{it:mult} has the
drawback that the element $k$ has no explicit formula in terms of the $x_i$ the $y_i$ and the
function $f$; it is obtained by an application of the Tietze extension theorem (see
Lemma~\ref{lem:pass through}). However, in practise there will frequently be a natural choice for
$k$. Suppose, for example, that $E^1$ is totally disconnected. Then $\mathcal{F}$ can be taken to
consist of characteristic functions of compact open $s$-sections. We can then take $k$ to be the
function that is identically zero off $s(\supp(y_{n+1} \diamond \dots \diamond y_p))$ and satisfies
$k(s(\mu)) = f(r(\mu)) H(x,y)(r(\mu))$ whenever $\mu \in \supp(y_{n+1} \diamond \dots \diamond
y_p)$; this $k$ is continuous because $s(\supp(y_{n+1} \diamond \dots \diamond y_p))$ is clopen.
\end{rmk}

\section{The topological graph arising from the shift map on the infinite path space}\label{sec:example}

In this section we discuss how our results apply to the topological graph $\widehat{E}$ arising
from the shift map on the infinite path space $E^\infty$ of a row-finite directed graph $E$ with no
sources. It is known that $\mathcal{O}_{X(\widehat{E})}$ is isomorphic to $C^*(E)$ (it could be
recovered from \cite{Brownlowe, BRV}) but existing proofs use the universal property of $C^*(E)$ to
induce a homomorphism from $C^*(E)$ to $\mathcal{O}_{X(\widehat{E})}$, invokes the gauge-invariant
uniqueness theorem for $C^*(E)$ to establish injectivity, and then argues surjectivity by hand. It
takes some work to show using the universal property of $\Oo_{X(\widehat{E})}$ that there is a
homomorphism going in the other way.

Let $E$ be a row-finite directed graph with no sources. We define $E^*=\bigcup_{n \geq 0}E^n$ and
define $E^{\infty}=\big\{z \in \prod_{i=1}^{\infty}E^1 : s(z_i)=r(z_{i+1}), \text{ for all $i=1, 2,
\dots$}\big\}$. We view $E^{\infty}$ as a topological space under the subspace topology coming from
the ambient space $\prod_{i=1}^{\infty}E^1$. For any $\mu \in E^* \setminus E^0$, we define the
cylinder set $Z(\mu)=\{z \in E^{\infty}: z_1=\mu_1, \dots, z_{\vert\mu\vert}=\mu_{\vert\mu\vert}
\}$. For $v \in E^0$ we define $Z(v)= \{z \in E^\infty : r(z_1) = v\}$. Since $E$ has no sources,
each $Z(\mu)$ is nonempty. The space $E^{\infty}$ is a locally compact Hausdorff space with a base
of compact open sets $\{Z(\mu) : \mu \in E^*\}$ (\cite[Corollary~2.2]{KumjianPaskEtAl:JFA97}).

Now we construct a topological graph
$\widehat{E}=(\widehat{E}^0,\widehat{E}^1,\widehat{r},\widehat{s})$ as follows. Let $\widehat{E}^0
= \widehat{E}^1=E^\infty$. Define $\widehat{r}$ to be the identity map, and define
$\widehat{s}(z)=(z_2,z_3,\dots)$ for all $z \in \widehat{E}^1$. Since $Z(\mu)$ is a compact open
$\widehat{s}$-section whenever $\mu \not\in E^0$, the map $\widehat{s}$ is a local homeomorphism
and hence $\widehat{E}$ is a topological graph.

For the following result recall the definition of a Cuntz-Krieger $E$-family from the first
paragraph of the introduction.

\begin{prop}\label{prp:Ehat}
Let $E$ be a row-finite directed graph with no sources, and let $\widehat{E}$ be the topological
graph described above.
\begin{enumerate}
\item\label{it:repn to CK} Let $(\psi,\pi)$ be a covariant Toeplitz representation of
    $\widehat{E}$. For $v \in E^0$ define $q_v := \pi(\chi_{Z(v)})$ and for $e \in E^1$ define
    $t_e := \psi(\chi_{Z(e)})$. Then the $q_v$ and the $t_e$ form a Cuntz-Krieger $E$-family.
\item\label{it:CK to repn} Let $\{q_v : v \in E^0\}$, $\{t_e : e \in E^1\}$ be a Cuntz-Krieger
    $E$-family in a $C^*$-algebra $B$. Then there is a unique covariant Toeplitz representation $(\psi,
    \pi)$ of $\widehat{E}$ such that $\psi(\chi_{Z(e\mu)}) = t_{e\mu} t^*_\mu$, and $\pi(\chi_{Z(\mu)}) = t_\mu t^*_\mu$ for all $e \in E^1$, $\mu \in E^*$.
\item\label{isomorphism between O(widehat{E}) and C^*(E)} Let $(j_1,j_0)$ be the universal Toeplitz representation of $\widehat{E}$ in $\mathcal{O}(E)$, and let $\{p_v,\ s_e:\ v \in E^0,\ e \in E^1\}$ be the Cuntz-Krieger $E$-family generating $C^*(E)$. Then there is an isomorphism $\mathcal{O}(\widehat{E}) \cong C^*(E)$ which carries each $j_1(\chi_{Z(e\mu)})$ to $s_{e\mu} s^*_\mu$, and each $j_0(\chi_{Z(\mu)})$ to $s_\mu s^*_\mu$.
\end{enumerate}
\end{prop}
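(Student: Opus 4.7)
The plan is to establish \eqref{it:repn to CK} and \eqref{it:CK to repn} first, then deduce \eqref{isomorphism between O(widehat{E}) and C^*(E)} by a universal-property argument. For \eqref{it:repn to CK}, I check the three Cuntz--Krieger relations directly. Mutual orthogonality of $\{q_v\}$ is immediate from $\chi_{Z(v)}\chi_{Z(w)} = \delta_{v,w}\chi_{Z(v)}$ and the fact that $\pi$ is a $*$-homomorphism. The relation $t_e^* t_e = q_{s(e)}$ follows from condition~\eqref{it:s*s} of Definition~\ref{reduced Toep rep} once one observes that $\widehat{s}$ restricts to a homeomorphism $Z(e)\to Z(s(e))$, whence $\widehat{\chi_{Z(e)}} = \chi_{Z(s(e))}$. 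For the vertex relation, since $\widehat{r}$ is the identity map, both $\widehat{E}^0_{\mathrm{sce}} = \emptyset$ and $\widehat{E}^0_{\mathrm{fin}} = \widehat{E}^0$, so $\widehat{E}^0_{\mathrm{rg}} = \widehat{E}^0$ and each $\chi_{Z(v)} \in C_c(\widehat{E}^0_{\mathrm{rg}})$. Applying Proposition~\ref{reduced covariant Toep rep is covariant Toep rep} to the cover of $\widehat{r}^{-1}(Z(v)) = Z(v)$ by the disjoint open $\widehat{s}$-sections $\{Z(e) : r(e) = v\}$ with partition $\{\chi_{Z(e)}\}$ delivers $q_v = \sum_{r(e)=v} t_e t_e^*$.

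For \eqref{it:CK to repn}, I construct $\pi$ first, then $\psi$, and verify covariance. Because $E$ is row-finite with no sources, each $Z(v)$ is compact (being the finite disjoint union of $Z(\mu)$ with $|\mu|=n$ and $r(\mu)=v$), so $C_0(E^\infty)$ is the $c_0$-direct sum of the commutative $C^*$-algebras $C(Z(v))$. For each $v$ and $n \ge 0$, the Cuntz--Krieger relations force $\{t_\mu t_\mu^* : \mu \in E^n,\, r(\mu) = v\}$ to be mutually orthogonal projections, so $\chi_{Z(\mu)} \mapsto t_\mu t_\mu^*$ extends to an isometric $*$-homomorphism from the finite-dimensional subalgebra $\lsp\{\chi_{Z(\mu)} : \mu \in E^n,\, r(\mu) = v\}$ of $C(Z(v))$ into $B$; the identity $t_\mu t_\mu^* = \sum_{r(e) = s(\mu)} t_{\mu e} t_{\mu e}^*$ makes these maps consistent as $n$ grows, and passing to norm limits and $c_0$-direct sums yields $\pi$. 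To construct $\psi$, I apply Proposition~\ref{cor:checkable toeplitz} with $\mathcal{B} = \{Z(\nu) : \nu \in E^{\ge 1}\}$, $\mathcal{F} = \{\chi_{Z(\nu)} : \nu \in E^{\ge 1}\}$, $\mathcal{G} = \{\chi_{Z(\mu)} : \mu \in E^*\}$, and $\psi_0(\chi_{Z(e\mu)}) := t_{e\mu} t_\mu^*$; the density hypothesis is immediate from the finite-dimensional picture, and the identity $\pi(\widehat{\sqrt{xy}}) = \psi_0(x)^*\psi_0(y)$ for $x,y \in \mathcal{F}$ reduces to manipulations with $t_\lambda^* t_\lambda = q_{s(\lambda)}$ and $t_e^* t_{e'} = \delta_{e,e'}\,q_{s(e)}$. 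Covariance is checked on $\mathcal{G}$ via Definition~\ref{dfn:new covariance}: for $|\mu|\ge 1$, $Z(\mu)$ is itself an open $\widehat{s}$-section, so the one-element cover $\{Z(\mu)\}$ with weight $\chi_{Z(\mu)}$ gives $\pi(\chi_{Z(\mu)}) = t_\mu q_{s(\mu)} t_\mu^* = t_\mu t_\mu^*$; and for $\mu = v \in E^0$ the cover $\{Z(e) : r(e) = v\}$ with weights $\{\chi_{Z(e)}\}$ returns exactly the Cuntz--Krieger sum at $v$.

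For \eqref{isomorphism between O(widehat{E}) and C^*(E)}, item \eqref{it:repn to CK} applied to $(j_1, j_0)$ gives a Cuntz--Krieger $E$-family in $\mathcal{O}(\widehat{E})$, so the universal property of $C^*(E)$ produces a homomorphism $\Phi : C^*(E) \to \mathcal{O}(\widehat{E})$ with $\Phi(p_v) = j_0(\chi_{Z(v)})$ and $\Phi(s_e) = j_1(\chi_{Z(e)})$. Conversely, item \eqref{it:CK to repn} applied to $\{p_v, s_e\} \subset C^*(E)$ produces a covariant Toeplitz representation of $\widehat{E}$ in $C^*(E)$, and Theorem~\ref{thm:mainCK} yields $\Psi : \mathcal{O}(\widehat{E}) \to C^*(E)$ with $\Psi(j_0(\chi_{Z(\mu)})) = s_\mu s_\mu^*$ and $\Psi(j_1(\chi_{Z(e\mu)})) = s_{e\mu} s_\mu^*$. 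The composition $\Psi \circ \Phi$ fixes the generators $p_v$ and $s_e$, hence is the identity on $C^*(E)$. For $\Phi \circ \Psi$, an induction on $|\mu|$ combining the right-action identity $j_1(\chi_{Z(e\mu)}) = j_1(\chi_{Z(e)})\,j_0(\chi_{Z(\mu)})$ (from the right-module structure of $X(\widehat{E})$) with the covariance equality $j_0(\chi_{Z(\mu)}) = j_1(\chi_{Z(\mu)})\,j_1(\chi_{Z(\mu)})^*$ for $|\mu| \ge 1$ (the one-element cover argument from the previous paragraph) shows $\Phi(s_\mu s_\mu^*) = j_0(\chi_{Z(\mu)})$ and $\Phi(s_{e\mu} s_\mu^*) = j_1(\chi_{Z(e\mu)})$; since the spans of such elements are dense in $j_0(C_0(\widehat{E}^0))$ and $j_1(C_c(\widehat{E}^1))$, which generate $\mathcal{O}(\widehat{E})$, $\Phi \circ \Psi$ is also the identity. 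The main obstacle is the construction of $\pi$ in \eqref{it:CK to repn}: one must carefully assemble the finite-dimensional commutative pieces determined by the Cuntz--Krieger projections into a coherent $*$-homomorphism on $C_0(E^\infty)$, using row-finiteness to keep each $Z(v)$ compact. Once $\pi$ is available, Proposition~\ref{cor:checkable toeplitz} absorbs most of the labor of extending $\psi$, the covariance check reduces to two short calculations on cylinder functions, and the universal-property bookkeeping in \eqref{isomorphism between O(widehat{E}) and C^*(E)} is then routine.
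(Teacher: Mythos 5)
Your proposal is correct and follows essentially the same route as the paper: part~\eqref{it:repn to CK} via the covariance relation applied to cylinder sets, part~\eqref{it:CK to repn} by defining $\pi$ on $\lsp\{\chi_{Z(\mu)}\}$ and invoking Proposition~\ref{cor:checkable toeplitz} for $\psi$, and part~\eqref{isomorphism between O(widehat{E}) and C^*(E)} via the universal properties of $C^*(E)$ and $\mathcal{O}(\widehat{E})$. The only deviations are cosmetic --- the paper proves part~\eqref{isomorphism between O(widehat{E}) and C^*(E)} by showing $C^*(E)$ itself satisfies the universal property of $\mathcal{O}(\widehat{E})$ rather than exhibiting two mutually inverse homomorphisms, and your word ``isometric'' for the maps $\chi_{Z(\mu)} \mapsto t_\mu t_\mu^*$ should be ``contractive'' (some $t_\mu t_\mu^*$ may vanish), which is all the extension argument needs.
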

\begin{proof}[Proof of Proposition~\ref{prp:Ehat}\eqref{it:repn to CK}]
The $q_v$ are mutually orthogonal projections because the $\chi_{Z(v)}$ are. For $\mu \in E^*
\setminus E^0$, the set $Z(\mu)$ is a compact open $s$-section. Thus for $e \in E^1$,
relation~\eqref{it:s*s} of Definition~\ref{reduced Toep rep} implies that
$t_e^*t_e=\pi(\widehat{\chi_{Z(e)}})=\pi(\chi_{Z(s(e))})=q_{s(e)}$. For $\mu \in E^* \setminus
E^0$, $(\{Z(\mu)\},Z(\mu))$ is an open local $\widehat{r}$-fibration. Since
$\supp(\chi_{Z(\mu)})=Z(\mu)$ and $(\psi, \pi)$ is covariant, Corollary~\ref{covariant condition of
top graph with local homeomorphic range map} implies that
\begin{equation}\label{covariant condition of E^hat}
\pi(\chi_{Z(\mu)})=\psi\Big(\sqrt{r^*_{Z(\mu)} \chi_{Z(\mu)}}\Big)\psi\Big(\sqrt{r^*_{Z(\mu)} \chi_{Z(\mu)}}\Big)^*=\psi(\chi_{Z(\mu)})\psi(\chi_{Z(\mu)})^*.
\end{equation}
So for $v \in E^0$, we have
\[
q_v = \pi(\chi_{Z(v)})
    = \sum_{r(e) = v} \pi(\chi_{Z(e)})
    = \sum_{r(e) = v} \psi(\chi_{Z(e)}) \psi(\chi_{Z(e)})^*
    = \sum_{r(e) = v} t_e t^*_e. \qedhere
\]
\end{proof}

\begin{proof}[Proof of Proposition~\ref{prp:Ehat}\eqref{it:CK to repn}]
Let $\Gg:= \{\chi_{Z(\mu)} : \mu \in E^* \setminus E^0\} \subseteq C_0(\widehat{E}^0)$. Then
$\lsp\Gg$ is a dense $*$-subalgebra of $C_0(\widehat{E}^0)$. We aim to define a map $\pi_0:
\lsp\mathcal{G} \to B$ by $\pi_0(\sum_{i=1}^{n}\alpha_i \chi_{Z(\mu_i)})=\sum_{i=1}^{n}\alpha_i
t_{\mu_i}t_{\mu_i}^*$. We check that $\pi_0$ is well-defined. It suffices to prove that
$\sum_{i=1}^{n}\alpha_i \chi_{Z(\mu_i)}=0$ implies $\sum_{i=1}^{n}\alpha_i t_{\mu_i}t_{\mu_i}^*=0$,
where the $\mu_i$ are distinct. Since $E$ is row-finite and has no sources,
\begin{align*}
\pi_0\Big(\sum_{e \in r^{-1}(s(\mu))}\chi_{Z(\mu e)}\Big)=\sum_{e \in r^{-1}(s(\mu))}t_{\mu_e}t_{\mu_e}^*=t_{\mu}\Big(\sum_{e \in r^{-1}(s(\mu))}t_et_e^*\Big)t_{\mu}^*=t_\mu t_\mu^*=\pi_0(\chi_{Z(\mu)}),
\end{align*}
so we can assume that the $\mu_i$ have the same length. It follows that the $\chi_{Z(\mu_i)}$ are
mutually orthogonal nonzero projections and hence each $\alpha_i=0$. So $\pi_0$ is well-defined. It
is obvious that $\pi_0$ is a linear map preserving the involution.
\cite[Corollary~1.15]{Raeburn:GraphAlg} implies that $\pi_0$ is a homomorphism. Now we show that
$\pi_0$ is norm decreasing. Fix $\mu_1$, $\dots$, $\mu_n$. We can assume that the $\mu_i$ are
distinct and have the same length. Then
\begin{align*}
\big\Vert \pi_0(\sum_{i=1}^{n}\alpha_i \chi_{Z(\mu_i)} ) \big\Vert^2&=\big\Vert \sum_{i=1}^{n} \vert\alpha_i\vert^2 t_{\mu_i}t_{\mu_i}^* \big\Vert \leq (\max_i\vert\alpha_i\vert^2)\big\Vert \sum_{i=1}^{n} t_{\mu_i}t_{\mu_i}^* \big\Vert \leq \max_i\vert\alpha_i\vert^2
\\&=\big\Vert\sum_{i=1}^{n}\alpha_i \chi_{Z(\mu_i)}\big\Vert^2,
\end{align*}
so $\pi_0$ is norm decreasing. Thus we obtain a unique homomorphism $\pi:C_0(\widehat{E}^0) \to C^*(E)$ by $\pi(\chi_{Z(\mu)})=t_{\mu}t_{\mu}^*$ for all $\mu \in E^*$.

We next aim to define a linear map $\psi : C_c(\widehat{E}^1) \to B$ by extension of the formula
$\psi(\chi_{Z(e\mu)}) = t_{e\mu} t^*_\mu$, and to show that the pair $(\psi, \pi)$ is a Toeplitz
representation of $\widehat{E}$. To do so, we will apply Proposition~\ref{cor:checkable
toeplitz}, so we need to set up the rest of the elements of the statement. Let $\Bb := \{Z(\mu) : \mu \in
E^* \setminus E^0\}$, and let $\Ff := \{\chi_{Z(e\mu)} : e \in E^1, \mu \in E^*\} \subseteq C_c(E^1)$. Certainly $\Ff$ and $\Bb$ satisfy Equation~\ref{dense in sup norm implies dense in C_0(E^0) norm}. Similarly to the construction of $\pi_0$, there is a well-defined linear map $\psi_0 : \lsp \Ff \to B$ satisfying $\psi_0(\sum^n_{i=1} \alpha_i \chi_{Z(e_i\mu_i)}) =\sum^n_{i=1} \alpha_i t_{e_i\mu_i}t^*_{\mu_i}$.

Fix $x = \chi_{Z(e\mu)}$ and $y =\chi_{Z(f\nu)}$ in $\Ff$. We verify Equation~\ref{pi(<x,y>)=psi(x)^*psi(y)}. For this, observe that
\[
\widehat{\sqrt{xy}} = \begin{cases}
    \chi_{Z(\mu)} &\text{ if $e\mu = f\nu\mu'$}\\
    \chi_{Z(\nu)} &\text{ if $f\nu = e\mu\nu'$}\\
    0 &\text{ otherwise.}
\end{cases}
\]
Then calculate:
\[
(t_{e\mu} t^*_\mu)^* t_{f\nu} t^*_\nu
    = t_\mu t_{e\mu}^* t_{f\nu} t^*_\nu
    = \begin{cases}
        t_\nu t^*_\nu &\text{ if $f\nu = e\mu\nu'$}\\
        t_\mu t^*_\mu &\text{ if $e\mu = f\nu\mu'$}\\
        0 &\text{ otherwise.}
    \end{cases}
\]
This establishes Equation~\eqref{pi(<x,y>)=psi(x)^*psi(y)}. So Proposition~\ref{cor:checkable
toeplitz} shows that $\psi_0$ extends uniquely to a linear map $\psi : C_c(\widehat{E}^1) \to B$. A
similar calculation establishes Equation~\eqref{psi(fx)=pi(f)psi(x) for f in G x in F}.
Proposition~\ref{cor:checkable toeplitz} implies that $(\psi, \pi)$ is a Toeplitz representation of
$E$.

It remains to check covariance. Since the range map is a homeomorphism onto $\widehat{E}^0$ we can
apply Corollary~\ref{covariant condition of top graph with local homeomorphic range map} with $\Gg$
as in the proof of part~\eqref{it:CK to repn} and the local $\widehat{r}$-fibrations
$\big\{(\{Z(\mu)\}, Z(\mu)) : \mu \in E^* \setminus E^0\}$ to see that $(\psi,\pi)$ is covariant.
\end{proof}

\begin{proof}[Proof of Proposition~\ref{prp:Ehat}\eqref{isomorphism between O(widehat{E}) and C^*(E)}]
We show that $C^*(E)$ has the universal property of $\mathcal{O}(\widehat{E})$ and then invoke
Theorem~\ref{thm:mainCK}. Proposition~\ref{prp:Ehat}\eqref{it:CK to repn} yields a covariant
Toeplitz representation $(\theta_1,\theta_0)$ of $\widehat{E}$ in $C^*(E)$ such that
$\theta_1(\chi_{Z(e\mu)}) = s_{e\mu} s^*_\mu$ for all $e \in E^1$, $\mu \in E^*$, and
$\theta_0(\chi_{Z(\mu)}) = s_\mu s^*_\mu$ for all $\mu \in E^*$. Fix a covariant Toeplitz
representation $(\psi,\pi)$ of $\widehat{E}$ in a $C^*$-algebra $B$. Then
Proposition~\ref{prp:Ehat}\eqref{it:repn to CK} gives a Cuntz-Krieger $E$-family
$\{q_v:=\pi(\chi_{Z(v)}),\ t_e:=\psi(\chi_{Z(e)}):\ v \in E^0,\ e \in E^1 \}$ in $B$. So
\cite[Proposition~1.21]{Raeburn:GraphAlg} gives a homomorphism $\rho: C^*(E) \to B$ such that
$\rho(p_v)=q_v$, and $\rho(s_e)=t_e$. An induction on the length of $\mu$ using
Equation~\eqref{covariant condition of E^hat} show that $\pi(\chi_{Z(\mu)})=t_\mu t_\mu^*$. For $e
\in E^1$, and $\mu \in E^*$, we have $\rho \circ \theta_1(\chi_{Z(e\mu)})=t_e t_\mu
t_\mu^*=\psi(\chi_{Z(e)})\pi(\chi_{Z(\mu)})=\psi(\chi_{Z(e\mu)})$. For $\mu \in E^* \setminus E^0$,
we have $\rho \circ \theta_0(\chi_{Z(\mu)})=t_\mu t_\mu^*=\pi(\chi_{Z(\mu)})$. Hence $\rho \circ
\theta_1=\psi$, and $\rho \circ \theta_0=\pi$. Since the image of $(\theta_1,\theta_0)$ generates
$C^*(E)$, Theorem~\ref{thm:mainCK} implies that there is an isomorphism $\mathcal{O}(E) \cong
C^*(E)$ which carries each $j_1(\chi_{Z(e\mu)})$ to $s_{e\mu} s^*_\mu$, and each
$j_0(\chi_{Z(\mu)})$ to $s_\mu s^*_\mu$.
\end{proof}


\begin{thebibliography}{00}
\bibitem{BatesHongEtAl:IJM02} T. Bates, J.H. Hong, I. Raeburn, and W. Szyma{\'n}ski, \emph{The
    ideal structure of the {$C\sp *$}-algebras of infinite graphs}, Illinois J. Math. \textbf{46}
    (2002), 1159--1176.
\bibitem{Brownlowe} N. Brownlowe, \emph{Realising the $C^*$-algebra of a higher-rank graph as
    an Exel crossed product}, J. Operator Theory, to appear (arXiv:0912.4635v1 [math.OA]).
\bibitem{BRV} N. Brownlowe, I. Raeburn, and S.T. Vittadello, \emph{Exel's
    crossed product for non-unital {$C^*$}-algebras}, Math. Proc. Cambridge Philos. Soc.
    \textbf{149} (2010), 423--444.
\bibitem{FowlerRaeburn:IUMJ99} N.J. Fowler  and I. Raeburn, \emph{The {T}oeplitz algebra of a
    {H}ilbert bimodule}, Indiana Univ. Math. J. \textbf{48} (1999), 155--181.
\bibitem{Katsura:TAMS04} T. Katsura, \emph{A class of {$C^\ast$}-algebras generalizing both graph
    algebras and homeomorphism {$C^\ast$}-algebras. {I}. {F}undamental results}, Trans. Amer. Math.
    Soc. \textbf{356} (2004), 4287--4322 (electronic).
\bibitem{Katsura:JFA04} T. Katsura, \emph{On {$C^*$}-algebras associated with
    {$C^*$}-correspondences}, J. Funct. Anal. \textbf{217} (2004), 366--401.
\bibitem{Kelley:GenTop} J.L. Kelley, General topology, Reprint of the 1955 edition [Van Nostrand,
    Toronto, Ont.], Graduate Texts in Mathematics, No. 27, Springer-Verlag, New York, 1975,
    xiv+298.
\bibitem{KumjianPaskEtAl:JFA97} A. Kumjian, D. Pask, I. Raeburn, and J. Renault, \emph{Graphs,
    groupoids, and {C}untz-{K}rieger algebras}, J. Funct. Anal. \textbf{144} (1997), 505--541.
\bibitem{Pimsner:FIC97} M.V. Pimsner, \emph{A class of {$C^*$}-algebras generalizing both
    {C}untz-{K}rieger algebras and crossed products by {${\bf Z}$}}, Fields Inst. Commun., 12, Free
    probability theory ({W}aterloo, {ON}, 1995), 189--212, Amer. Math. Soc., Providence, RI, 1997.
\bibitem{Raeburn:GraphAlg} I. Raeburn, Graph algebras, Published for the Conference Board of the
    Mathematical Sciences, Washington, DC, 2005, vi+113.
\bibitem{RaeburnWilliams:Morita} I. Raeburn  and D.P. Williams, Morita equivalence and
    continuous-trace {$C^*$}-algebras, American Mathematical Society, Providence, RI, 1998, xiv+327.
\bibitem{ReedSimon:FunctAnal} M. Reed  and B. Simon, Methods of modern mathematical physics. {I},
    Functional analysis, Academic Press Inc. [Harcourt Brace Jovanovich Publishers], New York, 1980, xv+400.
\bibitem{Simmons:TopAnal} G.F. Simmons, Introduction to topology and modern analysis, McGraw-Hill
    Book Co., Inc., New York, 1963, xv+372.
\end{thebibliography}
\end{document}